\documentclass[12pt,reqno]{amsart}

\usepackage{amssymb}
\usepackage{graphicx}

\numberwithin{equation}{section}

\def\ZZ{{\mathbb Z}}

\def\Stdepth{\operatorname{Stdepth}}
\def\Hdepth{\operatorname{Hdepth}}
\def\depth{\operatorname{depth}}
\def\id{\operatorname{id}}
\def\rank{\operatorname{rank}}
\def\Ann{\operatorname{Ann}}
\def\supp{\operatorname{supp}}

\let\Dirsum=\bigoplus
\let\dirsum=\oplus
\let\tensor=\otimes
\let\iso=\cong

\def\cD{{\mathcal D}}
\def\cH{{\mathcal H}}

\def\cG{{\mathcal G}}
\def\cK{{\mathcal K}}

\def\mm{{\mathfrak m}}
\def\nn{{\mathfrak n}}
\def\pp{{\mathfrak p}}

\def\sqf{\operatorname{sqf}}

\newtheorem{lemma}{Lemma}[section]
\newtheorem{corollary}[lemma]{Corollary}
\newtheorem{theorem}[lemma]{Theorem}
\newtheorem{proposition}[lemma]{Proposition}

\theoremstyle{definition}
\newtheorem{definition}[lemma]{Definition}
\newtheorem{remark}[lemma]{Remark}
\newtheorem{remarks}[lemma]{Remarks}

\textwidth=15cm \textheight=22cm \topmargin=0.5cm \oddsidemargin=0.5cm
\evensidemargin=0.5cm \advance\headheight1.15pt

\title{Stanley decompositions and Hilbert depth in the Koszul
complex}

\author{Winfried Bruns}
\address{Universit\"at Osnabr\"uck, Institut f\"ur Mathematik, 49069 Osnabr\"uck, Germany}
\email{wbruns@uos.de}

\author{Christian Krattenthaler$^{\dagger}$}
\address{Fakult\"at f\"ur Mathematik, Universit\"at Wien,
Nordbergstra{\ss}e~15, A-1090 Vienna, Austria.
WWW: \tt http://www.mat.univie.ac.at/\lower0.5ex\hbox{\~{}}kratt.}

\author{Jan Uliczka}
\address{Universit\"at Osnabr\"uck, Institut f\"ur Mathematik, 49069 Osnabr\"uck, Germany}
\email{Jan.Uliczka@uos.de}

\dedicatory{To Ralf Fr\"oberg on his 65th birthday}

\begin{document}

\thanks{$^\dagger$Research partially supported by the Austrian
Science Foundation FWF, grants Z130-N13 and S9607-N13,
the latter in the framework of the National Research Network
``Analytic Combinatorics and Probabilistic Number Theory"}

\begin{abstract}
Stanley decompositions of multigraded modules $M$ over polynomials rings
have been discussed intensively in recent years. There is a natural
notion of depth that goes with a Stanley decomposition, called the
{\it Stanley depth}. Stanley conjectured that the Stanley depth of a module
$M$ is always at least the (classical) depth of $M$. In this paper we
introduce a weaker type of decomposition, which we call {\it Hilbert
decomposition}, since it only depends on the Hilbert function of $M$, and
an analogous notion of depth, called {\it Hilbert depth}. Since Stanley
decompositions are Hilbert decompositions, the latter set upper bounds
to the existence of Stanley decompositions. The advantage of Hilbert
decompositions is that
they are easier to find. We test our new notion on the syzygy modules
of the residue class field of $K[X_1,\dots,X_n]$ (as usual identified with
$K$). Writing $M(n,k)$ for the $k$-th syzygy module, we show that the
Hilbert depth of $M(n,1)$ is $\lfloor(n+1)/2\rfloor$.
Furthermore, we show that, for $n > k \ge \lfloor n/2\rfloor$,
the Hilbert depth of $M(n,k)$ is equal to $n-1$. We
conjecture that the same holds for the Stanley depth. For the range
$n/2 > k > 1$, it seems impossible to come up with a compact formula for
the Hilbert depth. Instead, we provide very precise asymptotic results
as $n$ becomes large.
\end{abstract}

 \maketitle

 \section{Introduction}

In recent years \emph{Stanley decompositions} of multigraded modules
over polynomial rings $R=K[X_1,\dots,X_n]$ have been discussed
intensively. Such decompositions, introduced by Stanley in \cite{S},
break the module $M$ into a direct sum of graded vector subspaces,
each of which is of type $Sx$ where $x$ is a homogeneous element and
$S=K[X_{i_1},\dots,X_{i_d}]$ is a polynomial subalgebra. Stanley
conjectured that one can always find such a decomposition in which
$d\ge \depth M$ for each summand. (For unexplained terminology of
commutative algebra we refer the reader to \cite{BH}.)

One says that $M$ has \emph{Stanley depth} $m$, $\Stdepth M=m$,  if
one can find a Stanley decomposition in which $d\ge m$ for each
polynomial subalgebra involved, but none with $m$ replaced by $m+1$.
With this notation, Stanley's conjecture says $\Stdepth M\ge \depth
M$.

In this paper we introduce a weaker type of decomposition in which
we no longer require the summands to be submodules of $M$, but only
vector spaces isomorphic to polynomial subrings. Evidently, such
decompositions depend only on the Hilbert series of $M$, and
therefore they are called \emph{Hilbert decompositions}. The
\emph{Hilbert depth} $\Hdepth M$ is defined accordingly.

Since Stanley decompositions are Hilbert decompositions, the latter
set upper bounds to the existence of Stanley decompositions, and
since they are easier to find, one may try to construct a Stanley
decomposition by appropriately modifying a ``good'' Hilbert
decomposition.

Moreover, our discussion shows that it is worthwhile to also
consider the standard grading along with the multigrading, as
already suggested implicitly by Stanley, who allows arbitrary
gradings in his conjecture. In order to distinguish multigraded and
standard graded invariants, we use the indices $n$ and $1$,
respectively. All this is made precise in Section~\ref{StanHilb}. In
addition, in the same section, we collect several useful results
from the literature that are proved in a concise way, some of which
in an extended form.

While most papers are devoted to the case in which multigraded
components have $K$-dimension $\le 1$ (and in which Hilbert
decompositions and Stanley decompositions coincide under a mild
hypothesis), we test our notions on the syzygy modules of the
residue class field of $K[X_1,\dots,X_n]$ (as usual identified with
$K$). The Stanley depth of the first syzygy module, the maximal
ideal $\mm=(X_1,\dots,X_n)$, was found by Bir\'{o} et al.: $\Stdepth
\mm=\lfloor (n+1)/2\rfloor$. By the standard inductive approach to
the Koszul complex, it is then easily shown that the $k$-th syzygy
module $M(n,k)$ has multigraded Stanley depth $\ge \lfloor
(n+k)/2\rfloor$.

The further investigations reveal a significant difference between
the ``lower'' syzygy modules $M(n,k)$, $1<k<\lfloor n/2\rfloor$, and
the ``upper'' ones. For the upper ones, one can easily determine the
multigraded Hilbert depth: if $\lfloor n/2\rfloor<k<n$, then
$\Hdepth_n M(n,k)=n-1$, which is the best possible value for a
nonfree module. We believe that the multigraded Stanley depth has
the same value, and show that this holds for $k=n-3$ (in addition to
$k=n-2,n-1$).

In the lower range, it seems impossible to find a simple expression
even for $\Hdepth_1$, since the binomial sum that must be evaluated
for a precise bound (see Proposition~\ref{summation} and
Remark~\ref{rem:imposs}) cannot be summed in closed form. The best
we can offer in Section~\ref{Asympt} (apart from experimental values
for $n\le 22$) is asymptotic estimates for $\Hdepth_1 M(n,k)$ as $n$
becomes large. We consider two ``regimes": if $k$ is fixed and $n$
tends to $\infty$, then Theorem~\ref{thm:regimeA} provides a rather
precise asymptotic approximation, showing in particular that the
lower bound $\lfloor (n+k)/2\rfloor\sim n/2$ has the correct leading
asymptotic order, although it is still rather far away from the true
value. This changes, if both $k$ and $n$ tend to $\infty$ at a fixed
rate: as we show in Theorem~\ref{thm:regimeB}, in that case
$\Hdepth_1 M(n,k)\sim \varepsilon n$ with $\varepsilon>1/2$, where
$\varepsilon$ depends on the ratio between $k$ and $n$. In
particular, again, this turns out to be much larger than the
corresponding value provided by the lower bound $\lfloor
(n+k)/2\rfloor$ (see Remarks~\ref{rem:regimeB}.(2)).

\section{Stanley decompositions and Hilbert depth}\label{StanHilb}

We consider the polynomial ring $R=K[X_1,\dots,X_n]$ over a field
$K$ and two graded structures on $R$:
\begin{enumerate}
\item the \emph{multigrading}, more precisely, the $\ZZ^n$-grading
in which the degree of $X_i$ is the $i$-th vector $e_i$ of the
canonical basis;
\item the \emph{standard grading} over $\ZZ$ in which each $X_i$ has
degree $1$.
\end{enumerate}
All $R$-modules are assumed to be finitely generated.

In order to treat both cases in a uniform way, we use \emph{graded
retracts} of $R$, namely subalgebras $S\subset R$ such that there
exists a graded epimorphism $\pi:R\to S$
with $\pi|S=\id$. In the multigraded case, these retracts are the
subalgebras generated by a subset of the indeterminates, and, in the
standard graded case, they are the subalgebras generated by a set of
$1$-forms.

\begin{definition}\label{Stdef}
Let $M$ be a finitely generated graded $R$-module. A \emph{Stanley
decomposition} of $M$ is a finite family
$$
\cD=(S_i,x_i)_{i\in I}
$$
in which $x_i$ is a homogeneous element of $M$ and $S_i$ is a graded
$K$-algebra retract of $R$ for each $i\in I$ such that $S_i\cap \Ann
x_i=0$, and
$$
M=\Dirsum_{i\in I} S_ix_i
$$
as a graded $K$-vector space. (For convenience, we set $\Stdepth
0=\infty$.)
\end{definition}

While $M$ is not decomposed as an $R$-module in the definition, the
direct sum itself carries the structure of an $R$-module and has a
well-defined depth. Following Herzog et al.\ \cite{HSY} we make the
following definition.

\begin{definition}
The \emph{Stanley depth} $\Stdepth M$ of $M$ is the maximal depth of
a Stanley decomposition of $M$.
\end{definition}

In the following we will use the index $n$ in order to denote
invariants associated with the multigrading, and the index $1$ for
those associated with the standard grading. If no index appears in a
statement, then it applies to both cases.

\begin{remark}
Stanley \cite{S} introduced decompositions as in
Definition~\ref{Stdef} and conjectured that
\begin{equation}
\Stdepth M\ge \depth M\label{Stconj}
\end{equation}
for all modules $M$. However, one should note that the
decompositions considered by us are more special than Stanley's
since he allows arbitrary gradings on the polynomial ring.

The reason for our more restrictive definition is that we want the
denominators of the Hilbert series of the rings $S_i$ to divide the
denominator of the Hilbert series of $R$.
\end{remark}

It is not hard to see that Stanley's conjecture holds in the
standard graded case, at least for infinite fields. It was actually
proved by Baclawski and Garsia \cite{BG} before the conjecture was
made; see also Theorem~\ref{St_st}. For the multigraded case,
Stanley decompositions have recently been investigated in several
papers: Bir\'{o} et al.\ \cite{BHKTY}, Cimpoea{\c{s}} \cite{C},
Herzog et al.\ \cite{HSY}, Popescu \cite{P}, and Rauf \cite{R}.

From the combinatorial viewpoint, a module is often only an algebraic
substrate of its Hilbert function, and we may ask what
decompositions a given Hilbert function can afford.

\begin{definition}
Under the same assumptions on $R$ and $M$ as above, a \emph{Hilbert
decomposition} is a finite family
$$
\cH=(S_i,s_i)_{i\in I}
$$
such that
$s_i\in \ZZ^m$ (where $m=1$ or $m=n$, respectively, depending on whether
we are in the standard graded or in the multigraded case),
$S_i$ is a graded $K$-algebra retract of
$R$ for each $i\in I$, and
$$
M\iso\Dirsum_{i\in I} S_i(-s_i)
$$
as a graded $K$-vector space.
\end{definition}

A Stanley decomposition breaks $M$ into a direct sum of submodules
over suitable subalgebras, whereas for a Hilbert decomposition we
only require an isomorphism to the direct sum of modules over such
subalgebras. Clearly, Hilbert decompositions of $M$ depend only on
the Hilbert function of $M$. As for Stanley decompositions, we can
define $\depth \cH$.

\begin{definition}
The \emph{Hilbert depth} $\Hdepth M$ of $M$ is the maximal depth of
a Hilbert decomposition of $M$.
\end{definition}

Weakening Stanley's conjecture, one may ask whether
\begin{equation}\label{HD1}
\Hdepth M\ge \depth M,
\end{equation}
or, equivalently,
\begin{equation}\label{HD2}
\Hdepth M=\max\{\depth N: H(N,\_)=H(M,\_)\}.
\end{equation}
(Here $H(N,\_)$ denotes the Hilbert function of $N$, $H(N,g)=\dim_K
N_g$ for all $g\in \ZZ^m$.) It is clear that \eqref{HD2} implies
\eqref{HD1}, and the converse holds since $M$ and $N$ share all
Hilbert decompositions. Moreover, a positive answer to Stanley's
conjecture would evidently imply \eqref{HD1}.

Hilbert series, in the standard as well as in the multigraded case,
are rational functions of type
$$
H_M(T)=\frac{Q_M(T)}{(1-T)^n}\quad\text{and}\quad
H_M(T_1,\dots,T_n)=\frac{Q_M(T_1,\dots,T_n)}{(1-T_1)\cdots(1-T_n)},
$$
respectively, where $Q_M(T)\in \ZZ[T^{\pm1}]$ and
$Q_M(T_1,\dots,T_n)\in\ZZ[T_1^{\pm1},\dots,T_n^{\pm1}]$ are Laurent
polynomials. A Hilbert decomposition in the standard graded case
amounts to a representation of the numerator in the form
$$
Q_M(T)=\sum_j q_j(T)\,(1-T)^{t_j}
$$
where $q_j$ is a Laurent polynomial with \emph{positive}
coefficients. Then the depth of the decomposition is $n-\max_j t_j$.
In the multigraded case, it amounts to a representation
$$
Q_M(T_1,\dots,T_n)=\sum_j q_j(T_1,\dots,T_n)\prod_{i\in I_j}(1-T_i),
$$
where the $I_j$'s are subsets of $\{1,\dots,n\}$, and the
polynomials $q_j$ are nonzero and have nonnegative coefficients.
Here, the depth of the decomposition is $n-\max_j\vert I_j\vert$.

Consider the following example: $R=K[X,Y]$, $M=K\dirsum
YR/(X)\dirsum YR$. Then
$$
H_M(T)=H_R(T)=\frac1{(1-T)^2}
$$
and
$$
H_M(T_1,T_2)=\frac{1-T_1+T_2}{(1-T_1)(1-T_2)}=
\frac1{1-T_2}+\frac{T_2}{(1-T_1)(1-T_2)}.
$$
It follows immediately that $\Hdepth_1 M=2$ and $\Hdepth_2 M=1$,
whereas\break $\Stdepth_1 M=\Stdepth_2 M=0$.

The following example, taken from \cite{U}, shows that $\Stdepth_n
M< \Stdepth_1 M$ in general. Let $R=K[X,Y,Z]$ and $M=R/(XZ,YZ,Z^2)$.
Then $\Stdepth_3 M =0$ by Remark~\ref{0converse} below, since
$\depth M=0$ and $\dim_K M_a \leq 1$ for all $a \in \ZZ^3$. On the other
hand,
$$
M \iso k[X] \cdot 1 + k[X]\cdot Y + k[Y]\cdot(Y+Z) + k[X,Y]\cdot XY^2,
$$
is a $\ZZ$-Stanley decomposition. Hence $\Stdepth_1 M \geq 1 =
\Hdepth_1 M$. To sum up,  $\Stdepth_1 M = 1 > 0 = \Stdepth_3 M$.

A priori, it is not clear that Stanley or Hilbert decompositions
exist at all. In the multigraded case one can use a standard
filtration argument. Under much more general assumptions, $M$ has a
filtration
$$
0=M_0\subset M_1\subset \dots\subset M_q=M
$$
in which each quotient $M_{i+1}/M_i$ is isomorphic to a shifted copy
$R/\pp_i(-m_i)$ of a residue class ring modulo a graded prime ideal
$\pp_i$. In the multigraded case, this fact establishes the existence
of Stanley decompositions, since each of the prime ideals $\pp_i$ is
generated by a subset of $X_1,\dots,X_n$.

\begin{proposition}\label{exseq}
Let
$$
0\to U\to M\to N\to 0
$$
be an exact sequence of graded $R$-modules. If $U$ and $N$ have
Stanley decompositions, then so does $M$, and
$$
\Stdepth M\ge \min(\Stdepth U, \Stdepth N).
$$
The same statements apply to Hilbert decompositions and depth.
\end{proposition}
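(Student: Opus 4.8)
My plan is to reduce everything to the case of Stanley decompositions, since for Hilbert decompositions the statement is immediate from additivity of Hilbert functions: if $U\iso\Dirsum_{i\in I} S_i(-s_i)$ and $N\iso\Dirsum_{j\in J} T_j(-t_j)$ as graded vector spaces, then $H(M,\_)=H(U,\_)+H(N,\_)$ forces $M\iso\Dirsum_{i\in I} S_i(-s_i)\dirsum\Dirsum_{j\in J} T_j(-t_j)$ as graded vector spaces, which is a Hilbert decomposition of $M$; since the depth of a Hilbert or Stanley decomposition is the minimum of the depths of the retracts occurring in it, its depth is the minimum of the depths of the two given decompositions, and choosing the latter to realize $\Hdepth U$ and $\Hdepth N$ gives $\Hdepth M\ge\min(\Hdepth U,\Hdepth N)$.

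For Stanley decompositions, I would fix Stanley decompositions $\cD_U=(S_i,x_i)_{i\in I}$ of $U$ and $\cD_N=(T_j,y_j)_{j\in J}$ of $N$, identify $U$ with its image in $M$ under the injection, and for each $j$ pick a homogeneous preimage $\tilde y_j\in M$ of $y_j$ under the surjection $\pi\colon M\to N$, of the same degree as $y_j$. The claim is then that $\cD=(S_i,x_i)_{i\in I}\cup(T_j,\tilde y_j)_{j\in J}$ is a Stanley decomposition of $M$. First I would check the annihilator conditions: since $U\hookrightarrow M$ is injective, $\Ann_M x_i=\Ann_U x_i$ and hence $S_i\cap\Ann_M x_i=0$; and $r\tilde y_j=0$ in $M$ implies $ry_j=0$ in $N$, so $\Ann_M\tilde y_j\subseteq\Ann_N y_j$ and therefore $T_j\cap\Ann_M\tilde y_j=0$. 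Next, for the direct-sum decomposition of $M$ as a graded vector space, given homogeneous $m\in M$ I write $\pi(m)=\sum_j s_j y_j$ with $s_j\in T_j$; then $m-\sum_j s_j\tilde y_j\in\ker\pi=U=\Dirsum_{i} S_i x_i$, which shows $M=\sum_i S_i x_i+\sum_j T_j\tilde y_j$. For directness, if $\sum_i r_i x_i+\sum_j s_j\tilde y_j=0$ with $r_i\in S_i$, $s_j\in T_j$, then applying $\pi$ annihilates the first sum and leaves $\sum_j s_j y_j=0$ in $N=\Dirsum_j T_j y_j$, so each $s_j\in T_j\cap\Ann_N y_j=0$; hence $\sum_i r_i x_i=0$ in $U=\Dirsum_i S_i x_i$, giving each $r_i\in S_i\cap\Ann_U x_i=0$.

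Finally, since the retracts occurring in $\cD$ are exactly those of $\cD_U$ together with those of $\cD_N$, the depth of $\cD$ equals $\min(\depth\cD_U,\depth\cD_N)$; taking $\cD_U$ and $\cD_N$ with $\depth\cD_U=\Stdepth U$ and $\depth\cD_N=\Stdepth N$ yields $\Stdepth M\ge\min(\Stdepth U,\Stdepth N)$.

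I do not expect a real obstacle here: the argument is a routine diagram chase once the decompositions of $U$ and $N$ are in hand. The only point deserving a moment's attention is the annihilator condition for the lifted elements $\tilde y_j$, which works because annihilators can only shrink under $M\to N$; and one should observe that the particular choice of homogeneous preimage $\tilde y_j$ is irrelevant, since the verification uses nothing about $\tilde y_j$ beyond $\pi(\tilde y_j)=y_j$ and homogeneity of the correct degree.
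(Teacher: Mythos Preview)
Your proof is correct and follows exactly the approach the paper takes: the Hilbert case is dismissed via additivity of Hilbert functions, and for Stanley decompositions one lifts the generators of a Stanley decomposition of $N$ to homogeneous preimages in $M$ and combines them with the decomposition of $U$. The paper's proof is a two-sentence sketch of precisely this, leaving the verifications you carry out (annihilator conditions, spanning, directness) to the reader.
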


\begin{proof}
For Hilbert decompositions, the statement is completely trivial
since $M$ and $U\dirsum N$ have the same Hilbert function. For
Stanley decompositions, it is only necessary to lift the generators
in a Stanley decomposition of $N$ to homogeneous preimages in $M$.
\end{proof}

In the standard graded case, a filtration as above does not yield a
Stanley decomposition since the residue class rings $R/\pp_i$ fail
to be retracts in general. This failure is however compensated by
the existence of Noether normalizations in degree $1$,
provided $K$ is infinite. By the following theorem of Baclawski and
Garsia \cite{BG}, Stanley decompositions exist in the standard
graded case, at least under a mild restriction, and inequality
\eqref{Stconj} holds. For the convenience of the reader, we include
the short proof.

\begin{theorem}\label{St_st}
Let $K$ be an infinite field. Then, in the standard graded case,
every $R$-module $M\neq 0$ has a Stanley decomposition, and
$$
\Stdepth_1 M\ge \depth M.
$$
\end{theorem}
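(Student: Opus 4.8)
\medskip
\noindent\emph{Proof proposal.}
The plan is to induct on the number $n$ of variables. For $n=0$ the module $M$ is a finite-dimensional graded vector space of depth $0$, which splits as a direct sum of shifted copies of the trivial retract $K$. For $n\ge 1$ I would first reduce to the case $\depth M\ge 1$. Let $N=H^0_{\mm}(M)$ be the largest submodule of $M$ of finite length, i.e.\ the submodule of elements killed by a power of $\mm$. As a finite-dimensional graded vector space it has a Stanley decomposition (of depth $0$, or $N=0$). The quotient $M'=M/N$ satisfies $H^0_{\mm}(M')=0$, hence $\depth M'\ge 1$ whenever $M'\neq 0$. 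Applying Proposition~\ref{exseq} to $0\to N\to M\to M'\to 0$, it is therefore enough to prove the theorem for modules of depth $\ge 1$: when $\depth M=0$ the desired bound $\Stdepth_1 M\ge 0$ follows as soon as $M$ has a Stanley decomposition at all, and $M'$ acquires one from the case $\depth\ge 1$.

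So assume $\depth M\ge 1$; this is where $K$ infinite enters. The associated primes of $M$ are finitely many graded primes, all different from $\mm$, so each has a proper degree-$1$ part in $R_1$; as $K$ is infinite I may pick a linear form $\ell\in R_1$ lying in none of them, i.e.\ a non-zerodivisor on $M$. Then $\overline{M}:=M/\ell M$ is a module over $\overline{R}:=R/(\ell)$, again a standard graded polynomial ring but in $n-1$ variables, and $\depth_{\overline{R}}\overline{M}=\depth_R M-1$. By the induction hypothesis $\overline{M}$ has a Stanley decomposition $\overline{M}=\Dirsum_i S_i\overline{x_i}$ with $\dim S_i\ge\depth_R M-1$ for each $i$. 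Each retract $S_i\subseteq\overline{R}$ is generated by linear forms; lifting them to $R_1$ and adjoining $\ell$ produces a retract $S_i[\ell]$ of $R$ with $\dim S_i[\ell]=\dim S_i+1$ (the lifts together with $\ell$ are linearly independent, since their residues modulo $\ell$ are).

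What remains, and is the main point, is a lifting lemma: choosing a homogeneous preimage $x_i\in M$ of each $\overline{x_i}$, the family $(S_i[\ell],x_i)_i$ is a Stanley decomposition of $M$. For spanning, a homogeneous $m\in M$ reduces modulo $\ell$ to $\sum_i\overline{p_i}\,\overline{x_i}$ with $p_i\in S_i$, whence $m-\sum_i p_ix_i=\ell m_1$ with $\deg m_1<\deg m$; iterating and using finite generation of $M$, the process terminates in each degree and writes $m$ inside $\sum_i S_i[\ell]\,x_i$. For directness and freeness, a relation $\sum_i f_ix_i=0$ with $f_i\in S_i[\ell]$ reduces modulo $\ell$ to a relation among the $\overline{x_i}$; since $S_i\cap\ell R=0$ this forces the $\ell$-free part of each $f_i$ to vanish, after which $\ell$-torsion-freeness of $M$ permits dividing by $\ell$ and running an induction on the $\ell$-degree, giving all $f_i=0$; in particular $S_i[\ell]\cap\Ann x_i=0$. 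As every summand has dimension $\dim S_i+1\ge\depth_R M$, the decomposition has depth $\ge\depth M$, so $\Stdepth_1 M\ge\depth M$. I expect the bookkeeping in the lifting lemma---termination of the spanning procedure and the degree induction for directness---to be the only genuinely delicate step; everything else is prime avoidance over an infinite field together with the standard behaviour of depth modulo a linear non-zerodivisor.
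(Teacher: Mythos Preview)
Your argument is correct, but it follows a genuinely different route from the paper's own proof of the theorem. The paper inducts on $\dim M$: after passing to a Noether normalization in degree~$1$ (so that $\dim M=n$), it chooses a free graded submodule $F\subset M$ of maximal rank, observes that $\dim M/F<\dim M$ while $\depth M/F=\depth M$, and applies Proposition~\ref{exseq}. Your proof instead inducts on the number of variables: you strip off $H^0_\mm(M)$ to reduce to positive depth, pick a linear non-zerodivisor $\ell$ by prime avoidance, and lift a Stanley decomposition of $M/\ell M$ over $R/(\ell)$ back to $M$ by adjoining $\ell$ to each retract.

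Your lifting lemma is exactly the $r=1$ case of Proposition~\ref{regseq}, which appears a few paragraphs later in the paper; indeed the authors remark there that Proposition~\ref{regseq} furnishes an alternative proof of Theorem~\ref{St_st}, reducing it to the case $\depth M=0$. So you have in effect rediscovered that alternative, together with its proof. The trade-offs: the paper's free-submodule argument is shorter and avoids the somewhat delicate degree bookkeeping in the spanning/directness verification, while your approach is more explicitly constructive and isolates a reusable principle (lifting Stanley decompositions through a regular element in a retract) that the paper later states and uses separately. Both are standard; your write-up of the lifting step is careful enough that the ``delicate'' parts you flag go through without trouble.
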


\begin{proof}
If $\dim M=0$, the assertion is trivial, since $M$ is a
finite-dimensional $K$-vector space and $K$ is a retract of $R$.

Now suppose that $\dim M>0$. Note that for every graded $R$-module
there exists a homogeneous system of parameters $y_1,\dots,y_d$,
$d=\dim M$, in degree $1$. The essential point is that
$y_1,\dots,y_d$ generate a retract $S$ of $R$. Since all graded
retracts of $S$ are graded retracts of $R$, and since $\depth_S
M=\depth_R M$, we can replace $R$ by $S$. In other words, we may
assume that $\dim M=n$.

If $\depth M=n$, then $M$ is a free $R$-module, and the claim is
again obvious. Suppose that $\depth M< n$. Since $\dim M=\dim R$,
$M$ contains a free graded $R$-submodule $F$ of rank equal to $\rank
M$. Since $\depth M/F=\depth M$, but $\dim M/F<\dim M$, we can apply
induction.
\end{proof}

In the standard graded case, Hilbert decompositions were considered
by Uliczka \cite{U}. Among other things, he proved that
\begin{equation}\label{HD3}
\Hdepth M=n-\min\{u:Q_M(T)/(1-T)^u\text{ is positive}\}.
\end{equation}
Here $Q_M(T)$ is the numerator polynomial of the Hilbert series, and
a rational function is called \emph{positive} if its Laurent
expansion at $0$ has only nonnegative coefficients.

Our next result shows that, in the case that is certainly the most
interesting one from the combinatorial viewpoint, a Hilbert
decomposition is automatically a Stanley decomposition.

\begin{proposition}\label{multfree}
Suppose that $\dim_K M_t\le1$and $R_sM_t\neq0$ whenever
$R_s,M_t,\break M_{s+t}\neq0$. Let $\cH=(S_i,s_i)_{i\in I}$ be a
Hilbert decomposition of $M$, and choose a homogeneous nonzero
element $x_i\in M$ of degree $s_i$ for each $i$. Then
$\cD=(S_i,x_i)_{i\in I}$ is a Stanley decomposition of $M$.
\end{proposition}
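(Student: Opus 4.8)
In the multigraded case --- the setting of interest here; see the paragraph preceding the proposition --- a retract $S_i$ is a polynomial subring $K[X_j:j\in A_i]$ on some subset $A_i\subseteq\{1,\dots,n\}$, so each homogeneous component $(S_i)_a$ is either $0$ or the line spanned by the monomial $X^a$, and $R_s\neq0$ exactly when $s\in\ZZ_{\ge0}^n$, in which case $R_s=KX^s$. The plan is to first rewrite the Hilbert decomposition as the numerical identity
$$
\dim_K M_g=\sum_{i\in I}\dim_K(S_i)_{g-s_i}\qquad\text{for all }g\in\ZZ^n .
$$
Each summand on the right lies in $\{0,1\}$ and the left side is $\le1$ by hypothesis, so for every $g$ at most one index $i$ contributes, and such an index exists precisely when $M_g\neq0$; denote it $i(g)$ in that case. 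Taking $g=s_i$ gives $\dim_K M_{s_i}\ge\dim_K(S_i)_0=1$, so the prescribed element $x_i$ can indeed be chosen, and then $M_{s_i}=Kx_i$.

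The one substantial point is that $X^ax_i\neq0$ whenever $(S_i)_a\neq0$. To organise this I would consider the graded $K$-linear map
$$
\Phi\colon\Dirsum_{i\in I}S_i(-s_i)\longrightarrow M,\qquad S_i\ni s\longmapsto sx_i ,
$$
where $S_i$ acts on $M$ through the inclusion $S_i\subseteq R$. Fix a degree $g$ with $M_g\neq0$; in degree $g$ the source of $\Phi$ is the line $KX^{g-s_{i(g)}}$, which $\Phi$ maps to $X^{g-s_{i(g)}}x_{i(g)}$. Since $(S_{i(g)})_{g-s_{i(g)}}\neq0$, the exponent $g-s_{i(g)}$ lies in $\ZZ_{\ge0}^n$, so $R_{g-s_{i(g)}}\neq0$; moreover $M_{s_{i(g)}}=Kx_{i(g)}\neq0$ and $M_g\neq0$. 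The second hypothesis, applied with $s=g-s_{i(g)}$ and $t=s_{i(g)}$, therefore gives $R_{g-s_{i(g)}}M_{s_{i(g)}}\neq0$, i.e.\ $X^{g-s_{i(g)}}x_{i(g)}\neq0$. Hence $\Phi$ is injective in every degree; since $\Dirsum_{i\in I}S_i(-s_i)\iso M$ as graded vector spaces, the two sides have equal, finite dimension in each degree, so $\Phi$ is bijective in every degree, i.e.\ a graded isomorphism.

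The conclusion then follows formally. An isomorphism of graded vector spaces carries an internal direct sum onto an internal direct sum, so $M=\Dirsum_{i\in I}S_ix_i$ as a graded $K$-vector space; and the restriction of $\Phi$ to the $i$-th summand is the map $s\mapsto sx_i$, which is injective because $\Phi$ is, so its kernel $S_i\cap\Ann x_i$ is zero. These are exactly the two conditions in Definition~\ref{Stdef}, so $\cD=(S_i,x_i)_{i\in I}$ is a Stanley decomposition of $M$.

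I expect the only genuine obstacle to be establishing $X^{g-s_{i(g)}}x_{i(g)}\neq0$: this is precisely what the second hypothesis supplies, and it cannot be dropped. For instance, over $K[X_1,X_2]$ the module $M=R/(X_1)\dirsum R/(X_2)(-e_1)$ has all multigraded components of dimension $\le1$ and admits the Hilbert decomposition $(K[X_1],0),(K[X_2],e_2)$, yet $M_0$ is spanned by an element of the $R/(X_1)$-summand that is annihilated by $X_1$, so no choice of elements turns this into a Stanley decomposition. (The statement likewise fails in the standard graded case --- already for $M=R/(X_1)$ over $K[X_1,X_2]$, with the retract $K[X_1]$ --- so the multigraded structure of the retracts is used essentially.) The hypothesis $\dim_K M_g\le1$ plays the lighter role of collapsing each graded piece of the source of $\Phi$ to a single monomial line, which is what allows the one-dimensional assertion ``$R_sM_t\neq0$'' to carry the whole argument.
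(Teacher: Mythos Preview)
Your argument is correct and is essentially a careful, fully written-out version of the paper's one-line proof: the Hilbert identity forces the supports of the pieces $S_ix_i$ to be disjoint (since $\dim_K M_g\le1$), and the second hypothesis guarantees that each degree is actually hit, which you phrase cleanly via the injectivity of $\Phi$. Your explicit verification of $S_i\cap\Ann x_i=0$ and your counterexamples showing both hypotheses are genuinely needed---in particular the observation that the statement fails in the standard graded setting---go beyond what the paper records.
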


The proof is straightforward: the supporting degrees of the vector
spaces $S_ix_i$ do not overlap since $\dim_K M_t\le1$ for all $t$,
and all degrees are reached.

In the general case, the choice of the elements $x_i$ is of course
critical. The next proposition gives a necessary and sufficient
condition.

\begin{proposition}\label{Hilb->St}
Let $\cH=(S_i,s_i)_{i\in I}$ be a Hilbert decomposition of $M$, and
choose a homogeneous nonzero element $x_i\in M$ of degree $s_i$ for
each $i$.
\begin{enumerate}
\item
The following properties are equivalent:

\begin{enumerate}
\item $\cD=(S_i,x_i)_{i\in I}$ is a Stanley decomposition.
\item If\/ $\sum_{i\in I} a_ix_i=0$ with $a_i\in
S_i$, then $a_i=0$ for all $i$.
\end{enumerate}

\item In particular, $\cD$ is a Stanley decomposition if for every
degree $g$ and the family $\cG=\{i: (S_ix_i)_g\neq 0\}$ the elements
$x_i$, $i\in \cG$, are linearly independent.
\end{enumerate}
\end{proposition}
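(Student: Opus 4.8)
The plan is to package the Hilbert decomposition $\cH$ together with the chosen generators $x_i$ into a single graded $K$-linear map and to read both parts off from it. Let $e_i$ denote the degree-$s_i$ generator of the $i$-th summand of the model $\Dirsum_{i\in I}S_i(-s_i)$, and consider the graded $K$-linear map $\phi\colon\Dirsum_{i\in I}S_i(-s_i)\to M$ determined by $\phi(e_i)=x_i$; on the $i$-th summand it is simply multiplication $a\mapsto ax_i$ with $a\in S_i$. The observation that drives everything is that, since $\cH$ is a Hilbert decomposition, the source and target of $\phi$ have the same \emph{finite} $K$-dimension in every degree; hence $\phi$ is injective if and only if it is an isomorphism of graded $K$-vector spaces. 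I would also point out immediately that condition~(b) in part~(1) is nothing but the statement that $\phi$ is injective.

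For part~(1) I would then establish the equivalence of (a) with the injectivity of $\phi$. For ``(a) implies (b)'': if $\cD$ is a Stanley decomposition, then $M=\Dirsum_{i\in I}S_ix_i$ with $S_i\cap\Ann x_i=0$ for all $i$, so a relation $\sum_i a_ix_i=0$ with $a_i\in S_i$ forces each $a_ix_i=0$ by directness of the sum, whence $a_i\in S_i\cap\Ann x_i=0$. For the converse: if $\phi$ is injective it is an isomorphism of graded vector spaces by the dimension count, so each restriction $S_i(-s_i)\to S_ix_i$, $a\mapsto ax_i$, is injective -- that is, $S_i\cap\Ann x_i=0$ -- and $M=\Dirsum_{i\in I}S_ix_i$ as a graded $K$-vector space; together with the properties inherited from $\cH$ (the $S_i$ are graded retracts of $R$ and $x_i$ is homogeneous of degree $s_i$) this is exactly the assertion that $\cD$ is a Stanley decomposition.

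For part~(2) I would reduce the injectivity of $\phi$ to one degree $g$ at a time, which is legitimate because $\ker\phi$ is a graded subspace. A homogeneous relation in degree $g$ has the shape $\sum_i a_ix_i=0$ with $a_i\in(S_i)_{g-s_i}$, and only indices $i$ with $(S_i(-s_i))_g\neq0$ can occur; expanding each $a_i$ in a $K$-basis of $(S_i)_{g-s_i}$ rewrites such a relation as a $K$-linear relation among the elements that span the subspaces $(S_ix_i)_g\subseteq M_g$. In the multigraded case $(S_i)_{g-s_i}$ is spanned by a single monomial $m_i$, so the relation is literally $\sum_i\lambda_i\,m_ix_i=0$. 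The hypothesis of part~(2) says precisely that these spanning elements, one batch per $i\in\cG$, are $K$-linearly independent in $M_g$ -- which also forces each of them to be nonzero, so that $\cG$ really equals $\{i:(S_i(-s_i))_g\neq0\}$. Hence all coefficients vanish, all $a_i=0$, $\phi$ is injective, and part~(1) finishes the argument.

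I expect essentially all the mileage to come from the elementary dimension count for $\phi$; once that is in hand, part~(1) is formal. The one place that needs a careful reading is part~(2) in the standard graded case, where the components $(S_i)_{g-s_i}$ need not be one-dimensional, so ``the elements $x_i$, $i\in\cG$, are linearly independent'' must be understood as ``the images of $K$-bases of the $(S_i)_{g-s_i}$ under multiplication by $x_i$ are jointly linearly independent in $M_g$'', and one has to make sure the hypothesis also rules out a generator-multiple being annihilated, which is what makes the two descriptions of $\cG$ agree and lets the counting close up. I anticipate this bookkeeping, rather than any genuine difficulty, to be the only thing requiring attention.
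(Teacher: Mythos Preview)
Your argument is correct and is essentially the paper's own proof, just made explicit: the paper simply observes that condition~(1)(b) says the subspaces $S_ix_i$ form a direct sum inside $M$, whence by the Hilbert-function count they fill $M$, and then deduces (2)$\Rightarrow$(1)(b) by decomposing any relation into its homogeneous components---exactly your map $\phi$ and degree-by-degree reduction, without naming them.

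One remark on your closing worry about part~(2): the intended reading of ``the elements $x_i$, $i\in\cG$, are linearly independent'' is \emph{$R$-linear independence} (this is how the paper invokes the proposition later, in the Koszul section). With that reading there is no extra bookkeeping in the standard graded case: a degree-$g$ relation $\sum_{i\in\cG} c_ix_i=0$ has $c_i\in (S_i)_{g-s_i}\subset R$, and $R$-linear independence of the $x_i$ forces every $c_i=0$ directly, regardless of $\dim_K (S_i)_{g-s_i}$. So your reinterpretation in terms of joint $K$-independence of basis images is not needed, and the two descriptions of $\cG$ agree automatically once $R$-linear independence rules out annihilation.
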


In fact, the type of restricted linear independence in (1)(b) is
equivalent to the fact that the subspaces $S_ix_i$ form a direct
sum. Then they must ``fill'' $M$ since the direct sum has the same
Hilbert function as $M$. That (1)(b) follows from (2), results
immediately from the fact that every linear dependence relation of
homogeneous elements decomposes into its homogeneous components.

For a special case, the following proposition can be found in
\cite{R}.

\begin{proposition}\label{product}
Let $R$ and $S$ be polynomial rings over $K$, and let $M$ and $N$ be
graded modules over $R$ and $S$, respectively. Then
$$
\Stdepth M\tensor_K N\ge \Stdepth M + \Stdepth N,
$$
and the analogous inequality holds for $\Hdepth$. {\em(}Here,
$M\tensor_K N$ is considered as a module over $R\tensor_K S${\em)}.
\end{proposition}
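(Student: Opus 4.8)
The plan is to produce a decomposition of $M\tensor_K N$ by forming all componentwise ``products'' of a decomposition of $M$ with one of $N$. Write $R=K[X_1,\dots,X_n]$ and $S=K[Y_1,\dots,Y_m]$, so that $R\tensor_K S=K[X_1,\dots,X_n,Y_1,\dots,Y_m]$ and $M\tensor_K N$ is a finitely generated graded module over it. After disposing of the degenerate cases $M=0$ or $N=0$ (where $M\tensor_K N=0$ and the inequality reads $\infty\ge\infty$), I would choose a Stanley decomposition $\cD=(S_i,x_i)_{i\in I}$ of $M$ with $\depth\cD=\Stdepth M$ and a Stanley decomposition $\cF=(T_j,y_j)_{j\in J}$ of $N$ with $\depth\cF=\Stdepth N$; thus each $S_i$ is a retract of $R$ generated by at least $\Stdepth M$ of the indeterminates (resp.\ by that many linearly independent $1$-forms), and likewise for each $T_j$ with $\Stdepth N$. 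The candidate is
$$
\cD\tensor\cF=\bigl(S_i\tensor_K T_j,\ x_i\tensor y_j\bigr)_{(i,j)\in I\times J}.
$$

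Next I would verify the three requirements of Definition~\ref{Stdef}. First, $S_i\tensor_K T_j$ is a graded retract of $R\tensor_K S$: in the multigraded case it is generated by the union of the two subsets of indeterminates, and in the standard graded case by the union of the two systems of $1$-forms, which stays linearly independent because the two systems lie in disjoint sets of variables. Second, $(S_i\tensor_K T_j)\cap\Ann(x_i\tensor y_j)=0$: since $S_i\cap\Ann x_i=0$, multiplication by $x_i$ is an isomorphism $S_i\iso S_ix_i$ of graded vector spaces, and likewise $T_j\iso T_jy_j$; as $\tensor_K$ is exact, the map $a\tensor b\mapsto(a\tensor b)(x_i\tensor y_j)=ax_i\tensor by_j$ factors as the isomorphism $S_i\tensor_K T_j\iso(S_ix_i)\tensor_K(T_jy_j)$ followed by the inclusion into $M\tensor_K N$, and is therefore injective. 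Third, the summands fill $M\tensor_K N$: from $M=\Dirsum_i S_ix_i$ and $N=\Dirsum_j T_jy_j$ as graded vector spaces one gets $M\tensor_K N=\Dirsum_{i,j}(S_ix_i)\tensor_K(T_jy_j)$, and each summand is $(S_i\tensor_K T_j)(x_i\tensor y_j)$, being spanned by the elements $ax_i\tensor by_j$ with $a\in S_i$ and $b\in T_j$.

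It then remains to count depths. Each $S_i\tensor_K T_j$ is a polynomial ring in $d_i+e_j$ variables, where $d_i$ and $e_j$ are the numbers of generators of $S_i$ and $T_j$, so
$$
\depth(\cD\tensor\cF)=\min_{i,j}(d_i+e_j)=\min_i d_i+\min_j e_j=\depth\cD+\depth\cF=\Stdepth M+\Stdepth N,
$$
which yields the asserted inequality for $\Stdepth$. The argument for $\Hdepth$ is entirely parallel and even simpler: beginning from Hilbert decompositions $\cH=(S_i,s_i)_{i\in I}$ of $M$ and $(T_j,t_j)_{j\in J}$ of $N$ realizing $\Hdepth M$ and $\Hdepth N$, the family $\bigl(S_i\tensor_K T_j,(s_i,t_j)\bigr)_{(i,j)\in I\times J}$ is a Hilbert decomposition of $M\tensor_K N$, since $M\tensor_K N\iso\Dirsum_{i,j}(S_i\tensor_K T_j)\bigl(-(s_i,t_j)\bigr)$ as graded vector spaces (equivalently, $H_{M\tensor_K N}=H_M\cdot H_N$), and the same depth count applies. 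I do not anticipate a genuine obstacle; the only points that need a little care are the exactness argument for the annihilator condition and the observation that a tensor product of retracts is again a retract (in the standard graded case, that the combined $1$-forms remain linearly independent).
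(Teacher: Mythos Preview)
Your proof is correct and follows exactly the approach the paper has in mind: the paper dismisses the proposition as ``obvious since the tensor product is distributive with respect to direct sums,'' and your argument is precisely a careful unpacking of that sentence, verifying the retract, annihilator, and direct-sum conditions and then counting generators. There is nothing to add.
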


The proposition is obvious since the tensor product is distributive
with respect to direct sums. The following proposition was proved in
\cite[1.8]{R} for the multigraded case.

\begin{proposition}\label{regseq}
With the standard assumptions on $R$ and $M$, suppose that
$a_1,\dots,a_r$ is a homogeneous $M$-sequence such that
$K[a_1,\dots,a_r]$ is a graded retract of $R$. Then
$$
\Stdepth M\ge \Stdepth M/(a_1,\dots,a_r)+r,
$$
and the analogous inequality holds for $\Hdepth$.
\end{proposition}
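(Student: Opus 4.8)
The plan is to settle the case $r=1$ and then iterate. Since $U:=K[a_1,\dots,a_r]$ is a graded retract of $R$, we may assume that $a_1,\dots,a_r$ extends to a set of polynomial generators of $R$ (in the multigraded case the $a_i$ are, up to reindexing, indeterminates); in particular $R/(a_1,\dots,a_j)R$ is again a polynomial ring, of which $K[a_{j+1},\dots,a_r]$ is a graded retract, for every $j$. For $r=1$ put $a=a_1$, $\bar R=R/aR$, $\bar M=M/aM$. As $a$ is a nonzerodivisor on $M$, the sequence $0\to M(-\deg a)\to M\to\bar M\to0$ (first map: multiplication by $a$) is exact, so $H_M=H_{\bar M}\cdot H_{K[a]}$. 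Given a Hilbert decomposition $\bar{\cH}=(\bar S_i,\bar s_i)_{i\in I}$ of the $\bar R$-module $\bar M$, each $\bar S_i$ is a retract of $\bar R$, hence of $R$, and $H_{\bar M}=\sum_i T^{\bar s_i}H_{\bar S_i}$. Setting $\bar S_i':=\bar S_i\tensor_K K[a]\subset R$ — a graded retract of $R$ with $\dim\bar S_i'=\dim\bar S_i+1$ — and multiplying by $H_{K[a]}$ gives $H_M=\sum_i T^{\bar s_i}H_{\bar S_i'}$, i.e.\ $(\bar S_i',\bar s_i)_{i\in I}$ is a Hilbert decomposition of $M$ of depth $\depth\bar{\cH}+1$. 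Hence $\Hdepth M\ge\Hdepth\bar M+1$.

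For the Stanley statement, run the same construction starting from a Stanley decomposition $\bar{\cD}=(\bar S_i,\bar x_i)_{i\in I}$ of $\bar M$, where now $\bar s_i=\deg\bar x_i$; this produces the Hilbert decomposition $(\bar S_i',\bar s_i)_{i\in I}$ of $M$. Choose homogeneous lifts $x_i\in M$ of the $\bar x_i$ of degree $\bar s_i$, and consider $\cD'=(\bar S_i',x_i)_{i\in I}$. By Proposition~\ref{Hilb->St}(1), together with the observation that a linear relation splits into its homogeneous components, it suffices to show that every homogeneous relation $\sum_i b_ix_i=0$ with homogeneous $b_i\in\bar S_i'$ is trivial. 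Reducing modulo $a$ yields $\sum_i\bar b_i\bar x_i=0$ in $\bar M$ with $\bar b_i\in\bar S_i$; since $\bar{\cD}$ is a Stanley decomposition, Proposition~\ref{Hilb->St}(1) forces $\bar b_i=0$, hence $b_i=ab_i'$ with $b_i'\in\bar S_i'$ homogeneous. Then $a\sum_i b_i'x_i=0$, and since $a$ is a nonzerodivisor on $M$, $\sum_i b_i'x_i=0$ — a homogeneous relation of strictly smaller degree. Iterating this finitely often forces the required degrees outside the support of every $\bar S_i'$, so all coefficients, and hence all $b_i$, vanish. Thus $\cD'$ is a Stanley decomposition of $M$ of depth $\depth\bar{\cD}+1$, giving $\Stdepth M\ge\Stdepth\bar M+1$.

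For $r\ge2$ one inducts: the case $r=1$ applied to $a_1$ gives $\Stdepth M\ge\Stdepth M/a_1M+1$ (and likewise for $\Hdepth$), while $a_2,\dots,a_r$ is an $M/a_1M$-sequence with $K[a_2,\dots,a_r]$ a graded retract of $R/a_1R$, so by induction $\Stdepth M/a_1M\ge\Stdepth M/(a_1,\dots,a_r)+(r-1)$; adding gives the claim, and the same computation works for $\Hdepth$. The Hilbert-depth inequality is thus essentially just the multiplicativity $H_M=H_{M/aM}\cdot H_{K[a]}$ of Hilbert series across a nonzerodivisor; the point of substance is the Stanley refinement — verifying the restricted linear independence of Proposition~\ref{Hilb->St} for the lifted family — and the descent on powers of $a$ (using the nonzerodivisor hypothesis) is where that is paid for. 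A somewhat more conceptual variant of the $r=1$ step would first show that $M$ is a free graded $K[a]$-module (generation by graded Nakayama, freeness by the same $a$-adic descent) and then simply tensor a decomposition of $\bar M$ with $K[a]$, rendering the Stanley case equally transparent at the cost of the freeness lemma.
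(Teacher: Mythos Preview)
Your proof is correct and follows essentially the same approach as the paper's: both reduce to $r=1$ by induction, use the Hilbert series identity $H_M=H_{\bar M}\cdot H_{K[a]}$ for the Hilbert depth statement, and for Stanley depth lift the generators $\bar x_i$ to $M$, tensor the retracts with $K[a]$, and verify the linear-independence criterion of Proposition~\ref{Hilb->St} via the same $a$-adic descent (reduce mod $a$, use the Stanley decomposition of $\bar M$ to kill constant terms, divide by the nonzerodivisor $a$, and iterate). Your upfront reduction ``extend $a_1,\dots,a_r$ to polynomial generators of $R$'' is exactly what the paper does case by case (multigraded: $a_1$ is an indeterminate; standard graded: choose complementary $1$-forms $V_i$), and your closing remark on freeness of $M$ over $K[a]$ is a pleasant reformulation of the same descent.
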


\begin{proof}
Suppose $\cD'=(S_i',x'_i)$ is a Stanley decomposition of
$M'=M/(a_1,\dots,a_r)$. Then we lift the $x_i'$ to homogeneous
elements of the same degree in $M$ and claim that
$\cD=(S_i[a_1,\dots,a_r],x_i)$ is a Stanley decomposition of $M$.

By induction it is enough to treat the case $r=1$. Let $R'=R/(a_1)$.
First one should convince oneself that, in the multigraded case,
$a_1$ is an indeterminate that does not occur in any of the $S_i'$.
Since $S_i'$ is a retract of $R$, the same holds for $S_i'[a_1]$; we
may assume $a_1=X_n$ in this case. In the standard graded case, we
choose subspaces $V_i$ of $R_1$ such that $\dim_K V_i=\dim_K (S'_i)$
and $V_i$ is mapped onto $(S_i')_1$ by the epimorphism $R\to R'\to
S_i'$. Clearly, $a_i\notin V_i$, and so $R_i$ is again a retract.

Since $H_M(T)=H_{M'}(T)/(1-T)$ in the standard graded case and
$H_M(T_1,\dots,T_n)=H_{M'}(T_1,\dots,T_{n-1},T_n)/(1-T_n)$ in the
multigraded case, our desired Stanley decomposition is at least a
Hilbert decomposition. (This argument proves the assertion about
Hilbert depth.)

We use Proposition~\ref{Hilb->St} to prove that it is indeed a
Stanley decomposition. Consider a critical relation
$b_1x_{i_1}+\dots+b_rx_{i_r}=0$, and expand each $b_i$ as a
polynomial in $a_1$ with coefficients in $S_i'$. Reduction modulo
$a_1$ yields that the constant terms of the $b_i$ must be zero, and
we can factor $a_1$ from the remaining terms. But $a_1$ is not a
zero divisor, and it can be cancelled. This reduces the $a_1$-degree
of our coefficients by $1$, and we are done.
\end{proof}

Note that Proposition~\ref{regseq} implies the inequality in
Theorem~\ref{St_st}; more precisely, it reduces the proof of the
theorem to the case where $\depth M=0$, since one can find a
suitable $M$-sequence of $1$-forms.

\begin{corollary}\label{syzygy}
Let $M$ be the $j$-th graded syzygy of a graded $R$-module $N$. Then
$\Stdepth M\ge j$.
\end{corollary}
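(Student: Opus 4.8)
The plan is to obtain this from Proposition~\ref{regseq}. Concretely, if $a_1,\dots,a_j$ is an $M$-sequence of length $j$ for which $K[a_1,\dots,a_j]$ is a graded retract of $R$, then Proposition~\ref{regseq} gives
\[
\Stdepth M\ \ge\ \Stdepth\bigl(M/(a_1,\dots,a_j)M\bigr)+j\ \ge\ j,
\]
the last step because every finitely generated module admits a Stanley decomposition --- always in the multigraded case, and for $K$ infinite in the standard graded case, as in Theorem~\ref{St_st} --- and every Stanley decomposition has depth $\ge 0$. The obvious choice is $a_i=X_i$: the subring $K[X_1,\dots,X_j]$ is a graded retract of $R$ in the multigraded as well as in the standard graded setting, so the argument would be uniform. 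Before that one disposes of the trivial cases: for $j=0$ the assertion is merely $\Stdepth N\ge 0$, and for $M=0$ it is vacuous; so assume $j\ge 1$ and $M\neq 0$, in which case $1\le j\le n$ (a nonzero $j$-th syzygy in a minimal resolution forces this, by Hilbert's syzygy theorem).

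The one step I expect to take real work is showing that $X_1,\dots,X_j$ is an $M$-sequence. The naive hope is that torsion-freeness suffices: $M$, being a $j$-th syzygy, is a submodule of a free module, so every nonzerodivisor of $R$ is a nonzerodivisor on $M$. But this does \emph{not} make $X_1,\dots,X_j$ an $M$-sequence in general --- already for $j=1$: $\mm=(X,Y)=\Omega^1(K)$ is torsion-free over $K[X,Y]$, yet the class of $Y$ in $\mm/X\mm$ is killed by $X$, so $X,Y$ is not an $\mm$-sequence. What is needed is the full homological meaning of ``$j$-th syzygy''. I would argue as follows. By dimension shifting, $\operatorname{Tor}^R_i(M,L)\cong\operatorname{Tor}^R_{i+j}(N,L)$ for every $i\ge 1$ and every module $L$; and $\operatorname{pd}_R\bigl(R/(X_1,\dots,X_j)\bigr)=j$, since the Koszul complex on the regular sequence $X_1,\dots,X_j$ is a minimal free resolution of it. Hence, for $L=R/(X_1,\dots,X_j)$ and $i\ge 1$,
\[
\operatorname{Tor}^R_i\bigl(M,R/(X_1,\dots,X_j)\bigr)\cong\operatorname{Tor}^R_{i+j}\bigl(N,R/(X_1,\dots,X_j)\bigr)=0
\]
because $i+j>j$. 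Equivalently the Koszul homology $H_i(X_1,\dots,X_j;M)$ vanishes for all $i>0$, and for a graded module this is precisely the statement that $X_1,\dots,X_j$ is an $M$-sequence.

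With this established, the proof closes exactly as in the first paragraph, and uniformly for both gradings --- no passage to ``general'' linear forms is required, the variables $X_1,\dots,X_j$ already doing the job, and the $\operatorname{Tor}$-vanishing being grading-independent. The only residual bookkeeping is to observe that $M/(X_1,\dots,X_j)M$ is a genuine nonzero module (by graded Nakayama, since $M\neq 0$), so that the bound $\Stdepth\bigl(M/(X_1,\dots,X_j)M\bigr)\ge 0$ is legitimate. Beyond choosing the right homological input in the second paragraph, I do not foresee further obstacles.
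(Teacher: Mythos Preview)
Your proposal is correct and follows essentially the same strategy as the paper: apply Proposition~\ref{regseq} to an $R$-regular sequence of length $j$ that is also $M$-regular. The paper simply cites the fact that \emph{every} $R$-sequence of length $j$ is an $M$-sequence for a $j$-th syzygy $M$ (referring to Bruns--Vetter~(16.33)), whereas you supply the standard homological argument for this (dimension shifting gives $\operatorname{Tor}^R_i(M,R/(\underline{a}))\cong\operatorname{Tor}^R_{i+j}(N,R/(\underline{a}))=0$ for $i\ge1$, hence Koszul homology vanishes, hence regularity). Your Tor argument in fact works for any $R$-regular sequence $a_1,\dots,a_j$, not just the variables, so it actually reproves the cited fact in full; there is no genuine difference in route, only in whether the key input is quoted or derived.
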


For the proof it is enough to note that every $R$-sequence of length
$j$ is an $M$-sequence (see, for example, Bruns and Vetter
\cite[(16.33)]{BV}).

We use Proposition~\ref{regseq} to prove that Stanley's conjecture
holds in the multigraded case if $\depth M=1$. This was already
stated by Cimpoea{\c{s}} \cite{C}; however, the proof in \cite{C} is
not correct.

\begin{proposition}\label{depth0}
Suppose that $\depth M\ge 1$. Then $\Stdepth_n M\ge 1$.
\end{proposition}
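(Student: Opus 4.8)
The plan is to argue by induction on the number $n$ of indeterminates, splitting $M$ into an $X_n$-torsion part and an $X_n$-torsion-free part. For $n=1$ the claim is immediate: $\depth M\ge 1$ means $\mm=(X_1)\notin\Ass M$, so the $X_1$-power-torsion submodule of $M$ vanishes — a nonzero torsion element, multiplied by a suitable power of $X_1$, would be annihilated exactly by $(X_1)$ — hence $X_1$ is $M$-regular and Proposition~\ref{regseq} (with $r=1$ and the retract $K[X_1]$) applies. For $n\ge 2$ I would set $N=0:_M X_n^\infty$, the submodule of elements killed by some power of $X_n$, and note as usual that $X_n$ is a nonzerodivisor on $M/N$: if $X_n\bar x=0$ in $M/N$ then $X_n x\in N$, so $X_n^{k+1}x=0$ for suitable $k$ and $x\in N$. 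Since $K[X_n]$ is a retract of $R$, Proposition~\ref{regseq} gives $\Stdepth_n(M/N)\ge 1$.

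The real work is the submodule $N$, and here the idea is restriction of scalars. Since $N$ is finitely generated and annihilated by a fixed power $X_n^k$, it is already finitely generated as a module over $R'=K[X_1,\dots,X_{n-1}]$, namely by the multiples $X_n^l y_j$ with $0\le l<k$ of a homogeneous $R$-generating set $y_1,\dots,y_m$; moreover every graded $K$-algebra retract of $R'$ is one of $R$ (extend the coordinate projection by killing $X_n$). The key point is that $\depth_{R'}N\ge 1$: if some homogeneous $x\in N$ had $\Ann_{R'}(x)=(X_1,\dots,X_{n-1})$, then, for the least $l\ge 1$ with $X_n^l x=0$, the element $y=X_n^{l-1}x$ would be a nonzero element of $N\subseteq M$ with $X_n y=0$ and $X_iy=X_n^{l-1}(X_ix)=0$ for $i<n$, so $\mm\in\Ass M$, contradicting $\depth M\ge 1$. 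By the inductive hypothesis $N$ has a Stanley decomposition $(S_i,x_i)_{i\in I}$ over $R'$ in which every $S_i$ has positive dimension; since for $s\in S_i$ the product $sx_i$ in $N$ is the same whether formed over $R'$ or over $R$, one has $S_i\cap\Ann_R(x_i)=S_i\cap\Ann_{R'}(x_i)=0$, and $N=\Dirsum_{i\in I}S_ix_i$ as a graded $K$-vector space, so the same family is a Stanley decomposition of $N$ over $R$ with all retracts of dimension $\ge 1$, i.e.\ $\Stdepth_n N\ge 1$. Applying Proposition~\ref{exseq} to $0\to N\to M\to M/N\to 0$ then yields $\Stdepth_n M\ge\min(\Stdepth_n N,\Stdepth_n M/N)\ge 1$.

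The main obstacle is precisely the torsion part: one cannot in general expect a single indeterminate $X_i$ to be $M$-regular (this, the authors note, is the gap in the earlier argument), so Proposition~\ref{regseq} alone does not finish the proof. Passing to $R'$ turns $N$ into a module with strictly fewer indeterminates for which $\depth\ge 1$ is still available, which is what powers the induction; the only mild care needed is the associated-prime computation above and the observation that Stanley decompositions over the subring $R'$ remain Stanley decompositions over $R$. Everything else is a routine combination of Propositions~\ref{exseq} and~\ref{regseq}.
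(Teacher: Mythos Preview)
Your approach is sound in outline, but there is a real gap at the step where you transport the Stanley decomposition of $N$ from $R'$ back to $R$. The inductive hypothesis, applied to $N$ as a $\ZZ^{n-1}$-graded $R'$-module, yields generators $x_i$ that are homogeneous only for the coarser $\ZZ^{n-1}$-grading obtained by forgetting the $X_n$-coordinate, not for the $\ZZ^n$-grading that $N$ carries as a submodule of $M$. These genuinely differ: with $R=K[X_1,X_2]$ and $M=R/(X_2^2)$ one has $N=M$, and the pair $\bigl(K[X_1],\,1+X_2\bigr)$, $\bigl(K[X_1],\,X_2\bigr)$ is a perfectly good Stanley decomposition of $N$ over $R'=K[X_1]$ for the $\ZZ$-grading, yet $1+X_2$ is not $\ZZ^2$-homogeneous, so this is \emph{not} a Stanley decomposition over $R$. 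Hence the sentence ``the same family is a Stanley decomposition of $N$ over $R$'' does not follow as written. The repair is easy: first split $N=\bigoplus_j N^{(j)}$ according to $X_n$-degree (a finite direct sum of finitely generated $R'$-modules, since $X_n^kN=0$), note that your associated-prime argument gives $\depth_{R'}N^{(j)}\ge1$ for each nonzero piece, and apply the induction to each $N^{(j)}$ separately; the resulting $x_i$ then have a fixed $X_n$-degree and are automatically $\ZZ^n$-homogeneous.

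For comparison, the paper avoids this bookkeeping altogether by staying over $R$ throughout. It builds a single filtration $0=U_0\subset U_1\subset\dots\subset U_{n+1}=M$ with $U_i=\{x\in U_{i+1}:X_i^{\,j}x=0\text{ for some }j>0\}$, observes that $U_1=H_\mm^0(M)=0$ by hypothesis and that $X_i$ is a nonzerodivisor on each quotient $U_{i+1}/U_i$, and then invokes Propositions~\ref{regseq} and~\ref{exseq} directly --- no induction on $n$, no change of base ring, and hence no grading mismatch. Once repaired as above, your induction unrolls to essentially the same filtration, but the paper's packaging is shorter.
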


\begin{proof}
Set $U_{n+1}=M$, $U_0=0$, and define
$$
U_i=\{x\in U_{i+1}: X_{i}^jx=0\text{ for some }j>0\}, \quad
i=1,\dots,n.
$$
Then we have a filtration of multigraded modules
$$
0=U_0 \subset U_1\subset \dots\subset U_{n+1}=M,
$$
so that $\Stdepth_n M\ge \min_i \Stdepth_n U_{i+1}/U_i$. Moreover,
$U_1/U_0\iso U_1=H_\mm^0(M)$ where $\mm=(X_1,\dots,X_n)$ and $H_\mm$
denotes local cohomology.

By hypothesis, $H_\mm^0(M)=0$, and, by construction, $X_i$ is not a
zero divisor of $U_{i+1}/U_i$ for $i=1,\dots,n$. Therefore
$\Stdepth_n M\ge 1$.
\end{proof}

\begin{remark}\label{0converse}
The converse of Proposition~\ref{depth0} does not hold in general,
as documented by the example given in \cite[1.6]{C}.

However, it is easy to see that $\Stdepth M=0$ if $H_\mm^0(M)$
contains a full graded component $M_g$ of $M$. For, then we must
have $H_\mm^0(M)\cap S_ix_i\neq 0$ for some component of the Stanley
decomposition, so that $H_\nn^0(S_i)
\iso H_\nn^0(S_ix_i)\neq 0$ for the ideal $\nn$ generated by the
indeterminates of $S_i$. This forces $S_i=K$.

Evidently, the assumption on $H_\mm^0(M)$ is satisfied if all
homogeneous components have dimension $\le 1$ over $K$, and for this
case this remark appeared already in \cite{C}.
\end{remark}

\section{The Koszul complex}\label{Koszul}

In the following we want to investigate the syzygy modules of $K$,
viewed as an $R$-module by identification with $R/\mm$,
$\mm=(X_1,\dots,X_n)$. With this $R$-module structure, $K$ is
resolved by the Koszul complex
$$
\cK(X_1,\dots,X_n;R):0\to\bigwedge^n
R^n\xrightarrow{\partial}\bigwedge^{n-1}R^n\xrightarrow{\partial}\dots
\xrightarrow{\partial} R^n\xrightarrow{\partial} R\to 0
$$
where the basis vector $e_{i_1}\wedge\dots\wedge e_{i_k}$ of
$\bigwedge^k R^n$ , $i_1<\dots<i_k$, has degree $X_{i_1}\cdots
X_{i_k}$ (we identify monomials with their exponent vectors when we
speak of degrees). In the standard grading, the degree of
$e_{i_1}\wedge\dots\wedge e_{i_k}$ is simply $k$.

Let $M(n,k)$ be the $k$-th syzygy module of $K$. The Hilbert series
of this module can be immediately read off the free resolution; its
numerator  polynomial is
\begin{equation}\label{NumSt}
\binom{n}{k}T^k-\binom{n}{k+1}T^{k+1}+\dots+(-1)^{n-k}T^n
\end{equation}
in the standard graded case, and
\begin{equation}\label{NumMult}
Q(n,k)=\sigma_{n,k}-\sigma_{n,k+1}+\dots+(-1)^{n-k}\sigma_{n,n}
\end{equation}
in the multigraded case, where $\sigma_{n,j}$ denotes the $j$-th
elementary symmetric polynomial in the indeterminates
$T_1,\dots,T_n$. Just for the record, the multigraded Hilbert series
of $M(n,k)$ is given by
\begin{equation}
H_{M(n,k)}(T_1,\dots,T_n)=\sum_{a\in\ZZ_+^n}
\binom{|\supp(a)|-1}{k-1} T_1^{a_1}\cdots T_n^{a_n};
\end{equation}
here $\supp(a)$ denotes the set of indices $i$ with $a_i\neq 0$. The
standard graded Hilbert series is contained in
Proposition~\ref{summation} (with $s=n$).

For $k=1$, one has the following result.

\begin{theorem}\label{Biro}
We have
$$
\Hdepth_1 \mm=\Hdepth_n \mm = \Stdepth_n \mm=\lfloor (n+1)/2\rfloor.
$$
\end{theorem}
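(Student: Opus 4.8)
The plan is to prove the chain
$$
\Big\lfloor\tfrac{n+1}2\Big\rfloor\le\Stdepth_n\mm\le\Hdepth_n\mm\le\Hdepth_1\mm=\Big\lfloor\tfrac{n+1}2\Big\rfloor ,
$$
which forces equality throughout. The two middle inequalities are formal: a Stanley decomposition is in particular a Hilbert decomposition, so $\Stdepth_n\mm\le\Hdepth_n\mm$, and specializing $T_1,\dots,T_n\mapsto T$ sends a multigraded Hilbert decomposition to a standard graded one of the same depth, so $\Hdepth_n\mm\le\Hdepth_1\mm$. For the leftmost inequality I would invoke the theorem of Bir\'o et al.\ \cite{BHKTY}, which gives $\Stdepth_n\mm=\lfloor(n+1)/2\rfloor$. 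So the whole statement comes down to the single bound $\Hdepth_1\mm\le\lfloor(n+1)/2\rfloor$ — and in fact the computation below produces $\Hdepth_1\mm=\lfloor(n+1)/2\rfloor$ exactly.

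To get $\Hdepth_1\mm$ I would use Uliczka's formula \eqref{HD3}. By \eqref{NumSt} with $k=1$, the numerator of the standard graded Hilbert series of $\mm=M(n,1)$ is
$$
Q(T)=\sum_{j=1}^n(-1)^{j-1}\binom nj T^j=1-(1-T)^n ,
$$
so I must find the least $u$ for which $Q(T)/(1-T)^u$ has only nonnegative coefficients; since $Q(T)/(1-T)^n=H_\mm(T)$ is a Hilbert series, such $u$ exist with $u\le n$, and I may assume $n\ge2$ (the case $n=1$ being trivial, $\mm$ free). Writing $Q(T)/(1-T)^u=(1-T)^{-u}-(1-T)^{n-u}$ and expanding, the coefficient of $T^j$ is $\binom{u-1+j}{j}-(-1)^j\binom{n-u}{j}$ (with $\binom{n-u}{j}:=0$ for $j>n-u$). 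It vanishes for $j=0$ and is manifestly nonnegative for odd $j$, so only even $j$ matter; for those it equals $\binom{u-1+j}{j}-\binom{n-u}{j}$, which by monotonicity of binomial coefficients in the upper entry is $\ge0$ iff $u-1+j\ge n-u$. Since these conditions get weaker as the even index $j\ge2$ grows, positivity is decided by the coefficient of $T^2$, giving the clean criterion $2u\ge n-1$, i.e.\ $u\ge\lfloor n/2\rfloor$; the handful of small $n$ for which no even $j\ge2$ lies in range are checked directly and yield the same threshold. Hence the least admissible $u$ is $\lfloor n/2\rfloor$, and
$$
\Hdepth_1\mm=n-\Big\lfloor\tfrac n2\Big\rfloor=\Big\lfloor\tfrac{n+1}2\Big\rfloor .
$$

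Putting the pieces together completes the proof. The only step that requires genuine care is the positivity analysis: one has to argue cleanly that, among all the coefficients, it is the one of $T^2$ that is binding, handle the convention $\binom{n-u}{j}=0$ for $j>n-u$, and dispose of the few small values of $n$ by hand. Everything else is bookkeeping or citation. One could avoid appealing to \cite{BHKTY} for the lower bound by exhibiting an explicit multigraded Stanley decomposition of $\mm$ of depth $\lfloor(n+1)/2\rfloor$, but since that result is precisely the benchmark we want to compare against, there is no point in reproving it.
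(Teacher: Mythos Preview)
Your proof is correct and follows the same overall strategy as the paper: cite Bir\'o et al.\ for $\Stdepth_n\mm$, bound $\Hdepth_1\mm$ from above via the coefficient of $T^2$ in $Q(T)/(1-T)^u$, and sandwich using $\Stdepth_n\le\Hdepth_n\le\Hdepth_1$. The one pleasant addition is your use of the closed form $Q(T)=1-(1-T)^n$, which lets you compute $\Hdepth_1\mm$ exactly rather than merely bounding it above; the paper only sketches the upper bound and defers the full computation to \cite{U}.
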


\begin{proof}
For the difficult result on Stanley depth, see Bir\'{o} et al.\
\cite{BHKTY}. In order to estimate $\Hdepth_1$, one considers the
numerator polynomial of the Hilbert series,
$$
nT- \binom{n}{2}T^2 \pm\cdots.
$$
It is clear that we have to multiply by at least the power
$1/(1-T)^v$, with
$$
v=\left\lceil\frac{\binom{n}{2}}{n}\right\rceil=\lceil
(n-1)/2\rceil,
$$
in order to get a positive rational function. Hence, by \eqref{HD3},
$\lfloor (n+1)/2\rfloor$ is an upper bound for $\Hdepth_1 \mm$, and
the theorem follows. (See also \cite{U} for a direct computation of
$\Hdepth_1\mm$.)
\end{proof}

The Koszul complex allows (at least) two well-known inductive
approaches.

\begin{lemma}\label{KoszInd}
For all $n$ and $k$ one has
$$
\Stdepth M(n,k) \ge \Stdepth M(n-1,k)
$$
and
$$
\Stdepth M(n,k) \ge 1+\min\bigl\{\Stdepth M(n-1,k),\Stdepth
M(n-1,k-1)\bigr\},
$$
and the analogous inequalities hold for $\Hdepth$.
\end{lemma}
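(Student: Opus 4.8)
The plan is to establish the two inequalities separately, each coming from one of the two standard decompositions of the Koszul complex. Throughout put $R'=K[X_1,\dots,X_{n-1}]$, so that $R=R'[X_n]$, and recall that $M(n,k)$ is realized inside $\cK(X_1,\dots,X_n;R)$ as the image of $\partial$ on $\bigwedge^k R^n$, i.e.\ as the module of cycles in homological degree $k-1$. We may assume $1\le k\le n-1$; otherwise $M(n,k)$ is free or zero. In both cases the proof combines a short exact sequence with Propositions~\ref{exseq} and~\ref{product}.

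For the first inequality I would grade $M(n,k)$ by the $X_n$-degree: write $M(n,k)=\bigoplus_{d\ge0}M(n,k)_d$, where $M(n,k)_d$ is spanned by the $\ZZ^n$-homogeneous cycles whose last exponent equals $d$, and set $F:=\bigoplus_{d\ge1}M(n,k)_d$. There are two things to check. First, the ``$X_n$-free'' summand $M(n,k)_0$ is canonically $M(n-1,k)$, since a cycle involving no $X_n$ is precisely a cycle of the Koszul complex on $X_1,\dots,X_{n-1}$ over $R'$. Second, $F$ is a \emph{free} $R$-module: writing a homogeneous element of $F$ as $\omega+\eta\wedge e_n$ with $\omega\in\bigwedge^{k-1}R^{n-1}$, $\eta\in\bigwedge^{k-2}R^{n-1}$, the condition on the $X_n$-degree forces $\omega=X_n\omega'$, and the cycle condition then forces $\eta=\pm(\partial'\omega')$, where $\partial'$ is the Koszul differential for $X_1,\dots,X_{n-1}$; hence $\omega'\mapsto X_n\omega'\pm(\partial'\omega')\wedge e_n$ defines a graded $R$-module isomorphism $\bigwedge^{k-1}R^{n-1}(-e_n)\to F$. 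Since $X_nM(n,k)\subseteq F$, we obtain a short exact sequence of graded $R$-modules
$$0\longrightarrow F\longrightarrow M(n,k)\longrightarrow M(n-1,k)\longrightarrow 0$$
in which $X_n$ acts as zero on the quotient. Both $F$ (free) and $M(n-1,k)$ (a syzygy module over $R'$) admit Stanley decompositions, so Proposition~\ref{exseq} gives $\Stdepth M(n,k)\ge\min\{\Stdepth_R F,\ \Stdepth_R(M(n,k)/F)\}$. Now $\Stdepth_R F=n$, and any Stanley or Hilbert decomposition of a module annihilated by $X_n$ cannot involve $X_n$ in any retract, so $\Stdepth_R(M(n,k)/F)=\Stdepth_{R'}M(n-1,k)\le n-1$. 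Therefore $\Stdepth M(n,k)\ge\Stdepth M(n-1,k)$; the same argument, using the trivial form of Proposition~\ref{exseq} for Hilbert functions, gives the claim for $\Hdepth$.

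For the second inequality I would use the mapping-cone decomposition $\cK(X_1,\dots,X_n;R)=\cK(X_1,\dots,X_{n-1};R)\otimes_R\cK(X_n;R)$. Separating the forms not involving $e_n$ in the decomposition $\bigwedge^\bullet R^n=\bigwedge^\bullet R^{n-1}\oplus(\bigwedge^{\bullet-1}R^{n-1})\wedge e_n$ yields the classical short exact sequence
$$0\longrightarrow M(n-1,k)\otimes_{R'}R\longrightarrow M(n,k)\longrightarrow M(n-1,k-1)\otimes_{R'}R\,(-e_n)\longrightarrow 0$$
(with $M(n-1,0)=K$). As a module over $R'\otimes_K K[X_n]=R$ one has $M(n-1,j)\otimes_{R'}R=M(n-1,j)\otimes_K K[X_n]$, so Proposition~\ref{product} gives $\Stdepth\bigl(M(n-1,j)\otimes_{R'}R\bigr)\ge\Stdepth M(n-1,j)+1$, and a degree shift leaves the depth unchanged. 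Feeding this into Proposition~\ref{exseq} yields $\Stdepth M(n,k)\ge 1+\min\{\Stdepth M(n-1,k),\Stdepth M(n-1,k-1)\}$, and the same computation works verbatim for $\Hdepth$.

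I expect the real work to be the freeness of $F$ in the first part: one has to pin down the isomorphism $\bigwedge^{k-1}R^{n-1}(-e_n)\to F$, choosing the signs so that its image genuinely consists of cycles and checking that every homogeneous cycle of positive $X_n$-degree is hit. Everything else is routine bookkeeping with Propositions~\ref{exseq} and~\ref{product}, together with the elementary observation that the retracts---and hence the depth---of a module killed by a variable see only the remaining variables.
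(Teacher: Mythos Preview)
Your proof is correct and follows the same two short exact sequences as the paper. The only difference is in the first inequality: the paper describes your $F$ as the submodule $L$ generated by the elements $[i_1,\dots,i_{k-1},n]$ and deduces its freeness from the fact that it has $\binom{n-1}{k-1}$ generators and rank $\binom{n-1}{k-1}=\rank M(n,k)$, whereas you obtain the same conclusion via the explicit isomorphism $\bigwedge^{k-1}R^{n-1}(-e_n)\to F$.
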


\begin{proof}
Here and in the following we will write $[i_1,\dots,i_k]$ for
$\partial(e_{i_1}\wedge\dots\wedge e_{i_k})$. Consider the submodule
$L$ of $M(n,k)$ generated by the elements $[i_1,\dots,i_{k-1},n]$.
An inspection of $M(n,k+1)$ yields that $M(n,k)/L$ is annihilated by
$X_n$. Thus $\rank M(n,k)/L=0$, and $\rank L=\rank
M(n,k)=\binom{n-1}{k-1}$. Since $L$ is generated by exactly this
number of elements, it is a free submodule. Therefore $\Stdepth
M(n,k)\ge \Stdepth M(n,k)/L$.

Let $R'=K[X_1,\dots,X_{n-1}]$. The natural epimorphism $R\to R'$
that sends $X_i$ to itself for $i\neq n$ and $X_n$ to $0$, can be
lifted to a chain map of the Koszul complexes that sends $e_i$ to
``itself'' and $e_n$ to $0$. This map induces an epimorphism
$M(n,k)/L\to M(n-1,k)$, which is an isomorphism since the modules
have the same Hilbert function. This proves the first inequality.

For the second inequality we use the inductive construction of the
Koszul complex by iterated tensor products over $R$ (see
\cite[1.6.12]{BH}):
$$
\cK(X_1,\dots,X_n;R)=\cK(X_1,\dots,X_{n-1};R)\tensor_R \cK(X_n;R).
$$
It yields an exact sequence
$$
0\to N(n-1,k)\to M(n,k) \to N(n-1,k-1) \to 0,
$$
where $N(n-1,j)$ is the $j$-th syzygy module of
$R/(X_1,\dots,X_{n-1})$. On the other hand,
$N(n-1,j)=M(n-1,j)\tensor_K K[X_n]$, and the inequality follows
from Propositions~\ref{exseq} and \ref{product}.
\end{proof}

If we combine Theorem~\ref{Biro} inductively with the second
inequality, then we obtain a significant improvement of the bound
$\Stdepth M(n,k)\ge k$ that one gets for free from Corollary~\ref{syzygy}.

\begin{corollary}\label{StdepthMk}
Let $M(n,k)$ be the $k$-th syzygy module of $K$. Then
$$
\Stdepth_n M(n,k)\ge \lfloor (n+k)/2\rfloor.
$$
\end{corollary}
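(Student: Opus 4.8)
The plan is to derive Corollary~\ref{StdepthMk} by feeding Theorem~\ref{Biro} into the second inequality of Lemma~\ref{KoszInd} and inducting on $n$. Concretely, we want to show $\Stdepth_n M(n,k)\ge \lfloor(n+k)/2\rfloor$ for all $n\ge k\ge 1$. The base cases are $k=1$, which is exactly Theorem~\ref{Biro} (giving $\lfloor(n+1)/2\rfloor$), and $n=k$, where $M(n,n)$ is the free module $\bigwedge^n R^n$ of rank $1$, so its Stanley depth is $n=\lfloor(n+n)/2\rfloor$. These anchor the induction.

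For the inductive step, assume the bound holds for all pairs $(n',k')$ with $n'<n$ (and $n'\ge k'\ge 1$). Apply the second inequality of Lemma~\ref{KoszInd}:
$$
\Stdepth_n M(n,k)\ge 1+\min\bigl\{\Stdepth_n M(n-1,k),\,\Stdepth_n M(n-1,k-1)\bigr\}.
$$
By the induction hypothesis, $\Stdepth_n M(n-1,k)\ge\lfloor(n-1+k)/2\rfloor$ and $\Stdepth_n M(n-1,k-1)\ge\lfloor(n-1+k-1)/2\rfloor=\lfloor(n+k-2)/2\rfloor$. The minimum of the two lower bounds is $\lfloor(n+k-2)/2\rfloor$, so
$$
\Stdepth_n M(n,k)\ge 1+\left\lfloor\frac{n+k-2}{2}\right\rfloor=\left\lfloor\frac{n+k}{2}\right\rfloor,
$$
which is the claimed inequality. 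One minor point to check is that both $(n-1,k)$ and $(n-1,k-1)$ lie in the range where the induction hypothesis applies; this requires $n-1\ge k$, i.e.\ $n>k$, so the case $n=k$ must be handled separately as a base case, which we did above. When $n-1=k-1$, i.e.\ $k=n$, the term $M(n-1,k-1)=M(n-1,n-1)$ is again free of rank $1$ with Stanley depth $n-1$, consistent with the formula.

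I do not anticipate a serious obstacle here: the corollary is a formal consequence of two results already in hand, and the arithmetic $1+\lfloor(n+k-2)/2\rfloor=\lfloor(n+k)/2\rfloor$ is immediate. The only thing requiring a little care is bookkeeping on the range of validity of the induction and the identification of the correct base cases ($k=1$ via Theorem~\ref{Biro}, and the free modules $M(n,n)$), together with verifying that the "$\min$" in Lemma~\ref{KoszInd} is realized by the $M(n-1,k-1)$ term rather than the $M(n-1,k)$ term. Since $\lfloor(n+k-2)/2\rfloor\le\lfloor(n+k-1)/2\rfloor\le\lfloor(n-1+k)/2\rfloor$, this is automatic. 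The same argument applied verbatim to $\Hdepth$ via the $\Hdepth$ version of Lemma~\ref{KoszInd} and Theorem~\ref{Biro} would give the analogous lower bound for Hilbert depth, though for the corollary as stated only the Stanley depth version is needed.
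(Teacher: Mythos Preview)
Your argument is correct and is exactly the approach the paper has in mind: it states only that the corollary follows by combining Theorem~\ref{Biro} inductively with the second inequality of Lemma~\ref{KoszInd}, and you have written out precisely that induction, including the needed base cases $k=1$ and $k=n$ and the arithmetic $1+\lfloor(n+k-2)/2\rfloor=\lfloor(n+k)/2\rfloor$.
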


\begin{remark}\label{monregseq}
Theorem~\ref{Biro} has been generalized to ideals generated by
monomial regular sequences $y_1,\dots,y_m$ as follows:
$\Stdepth_n (y_1,\dots,y_m)=n-\lfloor m/2\rfloor$; see Shen
\cite[2.4]{Ch}. Since $R/I$ is resolved by the Koszul complex
$\cK(y_1,\dots,y_m;R)$, a similar induction as in the proof of
Corollary~\ref{StdepthMk} shows that the $k$-th syzygy module
of $R/(y_1,\dots,y_m)$ has multigraded Stanley depth $\ge
n-m+\lfloor (m+k)/2\rfloor$. In the induction, one must observe
that the indeterminate factors of the $y_i$ form pairwise
disjoint sets.
\end{remark}

The upper half of the resolution poses no problems for Hilbert
depth.

\begin{theorem}\label{Hupper}
Suppose $n>k\ge\lfloor n/2\rfloor$. Then
$$
\Hdepth_1 M(n,k)=\Hdepth_n M(n,k)=n-1.
$$
\end{theorem}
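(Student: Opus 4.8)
The plan is to establish the two equalities by sandwiching $\Hdepth M(n,k)$ between $n-1$ (as an upper bound, valid for both gradings, since $M(n,k)$ is not free) and $n-1$ (as a lower bound, which is the real content). For the upper bound, note that $M(n,k)$ has positive rank but is not free for $k\ge 1$ and $k<n$: its numerator polynomial \eqref{NumSt} is not a single monomial, so in the standard graded case $Q_M(T)/(1-T)^n$ fails to be positive (it would have to be a Laurent polynomial with nonnegative coefficients equal to a fixed positive integer eventually, forcing freeness), whence by \eqref{HD3} we get $\Hdepth_1 M(n,k)\le n-1$; the multigraded bound $\Hdepth_n M(n,k)\le\Hdepth_1 M(n,k)$ follows from the general inequality relating the two (a multigraded Hilbert decomposition specializes to a standard graded one). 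So it remains to prove $\Hdepth_n M(n,k)\ge n-1$, since this automatically gives $\Hdepth_1 M(n,k)\ge\Hdepth_n M(n,k)\ge n-1$ as well — wait, that inequality goes the wrong way, so I must prove $\Hdepth_1 M(n,k)\ge n-1$ directly, or prove the multigraded lower bound and separately argue the standard graded one. The cleanest route: prove the multigraded statement $\Hdepth_n M(n,k)\ge n-1$ by exhibiting an explicit multigraded Hilbert decomposition in which every retract uses at least $n-1$ of the indeterminates, i.e. a representation
$$
Q(n,k)=\sum_j q_j(T_1,\dots,T_n)\prod_{i\in I_j}(1-T_i),\qquad |I_j|\le 1,
$$
with the $q_j$ nonzero and nonnegative; then derive $\Hdepth_1\ge n-1$ from it by the specialization $T_i\mapsto T$.

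The heart of the matter is therefore the combinatorial identity producing such a decomposition of $Q(n,k)=\sum_{j\ge k}(-1)^{j-k}\sigma_{n,j}$. First I would rewrite $Q(n,k)$ using the telescoping/generating-function identity for syzygies: from the Koszul resolution one has $\prod_{i=1}^n(1-T_i) = \sum_{j=0}^n(-1)^j\sigma_{n,j}$, and more usefully the multigraded Hilbert series formula quoted in the excerpt,
$$
H_{M(n,k)}(T_1,\dots,T_n)=\sum_{a\in\ZZ_+^n}\binom{|\supp(a)|-1}{k-1}T_1^{a_1}\cdots T_n^{a_n}.
$$
The strategy is to split this sum according to $\supp(a)$: group the monomials supported on a fixed subset $S\subseteq\{1,\dots,n\}$ with $|S|\ge k$. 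The monomials with support exactly $S$ contribute $\binom{|S|-1}{k-1}\prod_{i\in S}\frac{T_i}{1-T_i}$, a "full-support" Stanley piece over $K[X_i:i\in S]$. Each such piece, having $|S|\ge k\ge\lfloor n/2\rfloor$, must be recombined with pieces for neighboring supports to pull out factors $(1-T_i)$ and raise the effective number of variables to $n-1$ in each summand; concretely, $\frac{T_i}{1-T_i}=\frac{1}{1-T_i}-1$, and one absorbs the "$-1$" corrections against the combinatorial coefficients using the hypothesis $k\ge\lfloor n/2\rfloor$, which is exactly what makes the binomial coefficients $\binom{|S|-1}{k-1}$ large enough for the bookkeeping to stay nonnegative. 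I would look for a closed expression of the form $Q(n,k)/\bigl((1-T_1)\cdots(1-T_n)\bigr)$ rearranged so that every term retains at least $n-1$ of the denominator factors — equivalently, a decomposition of $H_{M(n,k)}$ into shifted copies of $K[X_1,\dots,\widehat{X_i},\dots,X_n]$ and $K[X_1,\dots,X_n]$ only.

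The main obstacle I anticipate is proving nonnegativity of the coefficients $q_j$ after the recombination — i.e. verifying that the alternating sum $\sum_{j\ge k}(-1)^{j-k}\sigma_{n,j}$ really does admit such a "$(n-1)$-clean" expansion, and locating the precise point where the hypothesis $k\ge\lfloor n/2\rfloor$ is used. I expect this to reduce to a finite family of binomial inequalities of the shape $\binom{m}{k-1}\ge\binom{m-1}{k-1}+(\text{correction})$ for $m$ in the relevant range, or to an inductive argument on $n$ via Lemma~\ref{KoszInd}: the exact sequence $0\to N(n-1,k)\to M(n,k)\to N(n-1,k-1)\to 0$ with $N(n-1,j)=M(n-1,j)\tensor_K K[X_n]$ together with Propositions~\ref{exseq} and \ref{product} gives $\Hdepth_n M(n,k)\ge 1+\min\{\Hdepth_n M(n-1,k),\Hdepth_n M(n-1,k-1)\}$; if both $k$ and $k-1$ still satisfy the hypothesis relative to $n-1$ (which holds precisely when $k-1\ge\lfloor(n-1)/2\rfloor$, matching the stated range), induction delivers $\ge 1+(n-2)=n-1$, and the few boundary cases $k=\lfloor n/2\rfloor$ or $k=n-1$ are checked by hand using \eqref{HD3} and a direct decomposition. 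I would structure the final proof around this induction, with the explicit generating-function identity serving as the base case and sanity check.
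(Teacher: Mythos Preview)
Your overall architecture is right: the upper bound $n-1$ follows since $M(n,k)$ is not free, and the content is the multigraded lower bound (which then specializes to the standard graded one --- your first instinct was correct, the inequality $\Hdepth_1\ge\Hdepth_n$ does go the right way, so there was no need for the self-correction).

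The genuine gap is in both of your proposed routes to the lower bound. The inductive approach via Lemma~\ref{KoszInd} does \emph{not} cover the full range. For the step to give $n-1$ you need $k-1\ge\lfloor(n-1)/2\rfloor$; when $n=2m+1$ is odd and $k=\lfloor n/2\rfloor=m$, this fails, since $k-1=m-1<m=\lfloor(n-1)/2\rfloor$. So the induction reduces everything to precisely the case $n$ odd, $k=(n-1)/2$ --- and that boundary case is not a triviality to be ``checked by hand,'' it is the whole theorem. (The paper makes the same reduction later when discussing Stanley depth.) Your other route, grouping by support and absorbing correction terms from $T_i/(1-T_i)=1/(1-T_i)-1$, is too vague as stated; you never say which corrections cancel against which, and the ``binomial inequalities'' you hope for are not identified.

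What the paper actually does is a clean combinatorial matching. In the numerator $Q(n,k)=\sigma_{n,k}-\sigma_{n,k+1}+\sigma_{n,k+2}-\cdots$, each negative term $-\sigma_{n,u}$ (with $u=k+1,k+3,\dots$) is absorbed into the preceding positive term $\sigma_{n,u-1}$ by choosing, for every squarefree monomial $\mu$ of degree $u$, a divisor $\mu/T_p$ of degree $u-1$, in such a way that the assignment $\mu\mapsto\mu/T_p$ is injective. Such a matching exists precisely when $\binom{n}{u}\le\binom{n}{u-1}$, i.e.\ when $u>n/2$; since the smallest $u$ we need is $k+1$ and $k\ge\lfloor n/2\rfloor$, this holds throughout. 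The matching converts each pair into $(\mu/T_p)(1-T_p)$, so $Q(n,k)$ becomes a nonnegative combination of monomials and terms of the form $\nu(1-T_p)$, giving a Hilbert decomposition entirely into pieces of type $R$ and $R/(X_p)$. That is the missing idea: a Sperner-type matching in the Boolean lattice, not a binomial inequality or an induction.
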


\begin{proof}
Note that the maximal value $n$ is excluded. It can only be attained
by a module of Krull dimension $n$ with a positive numerator
polynomial in its Hilbert series, standard graded or multigraded. It
is therefore enough to consider the multigraded case.

Now we look at the multigraded numerator polynomial, given by
equation~\eqref{NumMult}. Consider the set $Y_u$ of squarefree
monomials in $T_1,\dots,T_n$ of degree $u$, summing up to
$\sigma_{nu}$. For $u\ge\lfloor n/2\rfloor$ one has an injective map
$Y_u\to Y_{u-1}$ that assigns each monomial a divisor (cf.\
\cite[p.~35]{StWhAA}). It follows that we can write $Q(n,k)$ as a
sum of monomials and polynomials of type
$\mu(1-T_p)$ where $\mu$ is a monomial. Exactly those terms
$\mu(1-T_p)$ appear for which $\mu$ is the image of $\mu T_p$ under
the injection.

This leads to a Hilbert decomposition in which the summands are of
type $R$ and $R/(X_p)$ (with appropriate shifts). More precisely,
the decomposition is given by
$$
(K[F'_i],X^{F_i}),
$$
where
\begin{itemize}
\item $F_i$ runs
through the subsets of $\{1,\dots,n\}$ with $k+j$
elements, $j$ even,
\item $X^{F_i}$ is the product of the indeterminates
    dividing $F_i$,
\item $F_i'=\{1,\dots,n\}$ if
$T^{F_i}$ is not in the image of the injection, and
$F_i'=\{1,\dots,n\}\setminus\{p\}$ if $T^{F_i}$ is the image of
$T^{F_i\cup\{p\}}$,
\item $K[F_i']$ is the polynomial ring in the indeterminates $X_q$,
$q\in F_i'$.
\end{itemize}
\end{proof}

One can try to convert the Hilbert decomposition indicated in the
proof of Theorem~\ref{Hupper} into a Stanley decomposition by the
following method. To simplify notation, we denote the element
$[i_1,\dots,i_k]$ by $w_G$ where $G=\{i_1,\dots,i_k\}$. We call
these elements \emph{generators} and the products $\mu w_G$, $\mu$ a
monomial in $R$, \emph{monomials}. In the multigraded structure of
the Koszul complex, the degree of $\mu w_G$ is $\mu X^G$ (where we
again identify a monomial with its exponent vector).

For each pair $(K[F_i'],F_i)$ in the decomposition, we now
choose a monomial $h_i=\mu w_G$ such that $\mu X^G=X^{F_i}$.
Let us call $h_i$ the \emph{hook} of $(K[F'_i],X^{F_i})$. In
the total set of monomials that we obtain by multiplying $h_i$
by the monomials in $K[F_i']$ and collecting over all $i$, each
multidegree appears with the right multiplicity (because we are
starting from a Hilbert decomposition). The crucial point is to
make these monomials (of the same degree) linearly independent
over $K$.

Note that each hook produces a given multidegree at most once. Fix a
multidegree, and consider all hooks that contribute to it. Each of
them has the form $\mu w_G$, and it is enough to make the family of
generators $w_G$ associated with the given multidegree linearly
independent over $R$ (Proposition~\ref{Hilb->St}).

For a given monomial $\mu$ in $R$, let the squarefree part
$\sqf(\mu)$ be the product of the indeterminates dividing $\mu$.
Clearly, a generator is associated with a given multidegree $\nu$ if
and only if it is associated with $\sqf(\nu)$ (since all hooks are
squarefree). This observation reduces the test for linear
independence to the squarefree degrees.

To prove the desired linear independence, we use the following simple
criterion: if we can order a family $(w_G)_{G\in\cG}$ in such a way
that $G_1\cup\dots\cup G_m\supsetneq G_1\cup\dots\cup G_{m-1}$ for
all $m$, then the family $\cG$ is linearly independent.

Let us now consider the special case $n=5$, $k=2$. One has
$\Stdepth_n M(5,2)\ge 3$ by Corollary~\ref{StdepthMk}, but in fact
$\Stdepth_n M(5,2)=4$, as we will see now. Following
\cite[p.~35]{StWhAA}, we obtain an injection $Y_3\to Y_2$ if we go
through the monomials $\mu$ in $Y_3$ lexicographically and choose
for each $\mu$ the lexicographically smallest divisor that is still
available: $123 \mapsto 12$, $124\mapsto 14$,\dots, $345\mapsto 34$.
Furthermore $12345\mapsto 1234$.

For the squarefree monomials of degree $2$, there is only a single
choice of hooks, namely the corresponding generator, and this leads
to no problem in degree $3$: if the total degree of a squarefree
monomial is $3$, then there are exactly two generators associated
with it, and they are automatically linearly independent.

Now we come to total degree $4$, and the choice of hooks
becomes critical. Consider $1234$. There are exactly $6$
monomials of this multidegree. Of these two are already in use,
namely $13[24]=X_1X_3[24]$ ($24$ is the image of $245$ in our
injection) and $12[34]$ ($345\mapsto 34$). Since $14[23]$ is
linearly dependent on the first two over $K$, it is also
excluded, and we choose $34[12]$ as the hook of $1234$. It is
``good,'' since $[24],[34],[12]$ are linearly independent over
$R$.

Further choices: $1235\mapsto 23[15]$, $1245\mapsto 25[14]$,
$1345\mapsto 45[13]$, $2345\mapsto 45[23]$. Again we get
linearly independent families of generators for each squarefree
multidegree of total degree $4$.

The generators associated with multidegree $12345$ are
$[15],[14],[13],[23]$. They are linearly independent, and we
are done.

Using Lemma~\ref{KoszInd}, one obtains that
\begin{equation}
\Stdepth M(n,n-3)=n-1
\end{equation}
for $n\ge 5$. We believe that $\Stdepth M(n,k)=n-1$ for all $k\ge
\lfloor n/2\rfloor$. It suffices to show this for $n$ odd,
$k=(n-1)/2$. The general statement would follow by induction.

In the lower half of the Koszul complex the situation is much more
complicated, and it seems impossible to give a precise, simple
expression even for $\Hdepth_1$. The proposition below provides a
trivial upper bound.

\begin{proposition}\label{Hbound}
Let $k<\lfloor n/2\rfloor$. Then
$$
\Hdepth_1 M(n,k) \le n-\left \lceil \frac{n-k}{k+1}\right\rceil.
$$
\end{proposition}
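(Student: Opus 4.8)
The plan is to invoke the characterization \eqref{HD3} of standard graded Hilbert depth: writing $Q(T)=\sum_{j=k}^{n}(-1)^{j-k}\binom{n}{j}T^{j}$ for the numerator polynomial \eqref{NumSt} of $M(n,k)$, we have $\Hdepth_1 M(n,k)=n-u_0$, where $u_0=\min\{u:Q(T)/(1-T)^{u}\ \text{is positive}\}$. Thus it suffices to prove the lower bound $u_0\ge\lceil(n-k)/(k+1)\rceil$, i.e.\ that $Q(T)/(1-T)^{u}$ fails to be positive whenever the integer $u$ satisfies $u<(n-k)/(k+1)$.

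To do this I would simply read off a low-order coefficient. Since $Q(T)=\binom{n}{k}T^{k}-\binom{n}{k+1}T^{k+1}+\cdots$ and $1/(1-T)^{u}=\sum_{\ell\ge0}\binom{\ell+u-1}{u-1}T^{\ell}$, the Laurent expansion of $Q(T)/(1-T)^{u}$ at $0$ begins in degree $k$ with the positive coefficient $\binom{n}{k}$, and the coefficient of $T^{k+1}$ equals
\[
\binom{n}{k}\binom{u}{u-1}-\binom{n}{k+1}=u\binom{n}{k}-\binom{n}{k+1}.
\]
This is nonnegative exactly when $u\ge\binom{n}{k+1}/\binom{n}{k}=(n-k)/(k+1)$, hence, for integral $u$, exactly when $u\ge\lceil(n-k)/(k+1)\rceil$. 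Consequently, if $u\le\lceil(n-k)/(k+1)\rceil-1$, then $u<(n-k)/(k+1)$ and the $T^{k+1}$-coefficient is strictly negative, so $Q(T)/(1-T)^{u}$ is not positive. Therefore $u_0\ge\lceil(n-k)/(k+1)\rceil$, and \eqref{HD3} yields $\Hdepth_1 M(n,k)=n-u_0\le n-\lceil(n-k)/(k+1)\rceil$.

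I do not expect any real obstacle here: the argument only uses the single coefficient of $T^{k+1}$, and the only point needing a little care is the passage from the rational inequality $u\ge(n-k)/(k+1)$ to its integer ceiling. The hypothesis $k<\lfloor n/2\rfloor$ is used only to make the estimate meaningful, since then $(n-k)/(k+1)>1$ and the bound is at most $n-2$; for $k\ge\lfloor n/2\rfloor$ the same computation merely reproduces the value $n-1$ already established in Theorem~\ref{Hupper}. Of course, obtaining the exact value of $\Hdepth_1 M(n,k)$ in the lower range would require analysing the higher coefficients of $Q(T)/(1-T)^{u}$ as well, and that is precisely the difficulty handled by the asymptotic results later in the paper rather than by this crude bound.
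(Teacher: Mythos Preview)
Your proof is correct and follows exactly the approach indicated in the paper: the paper's one-line argument ``Simply consider the quotient of the second, negative term in the numerator polynomial by the first term'' is precisely your computation of the $T^{k+1}$-coefficient of $Q(T)/(1-T)^u$ and the resulting inequality $u\ge\binom{n}{k+1}/\binom{n}{k}=(n-k)/(k+1)$. Your write-up simply unpacks this in full detail, including the passage to the ceiling and the role of the hypothesis $k<\lfloor n/2\rfloor$.
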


\begin{proof}
Simply consider the quotient of the second, negative term in the
numerator polynomial by the first term.
\end{proof}

Naively one might think that the proposition gives the correct value
as it does in the case $k=1$ (and for $k\ge\lfloor n/2\rfloor$). A
computer experiment confirms this value for $n\le 22$. However for
$n=23$ it fails for $k=3,4,5$. As we shall see in the next section,
the upper bound in Proposition~\ref{Hbound} is very far from the
truth, see Theorems~\ref{thm:regimeA} and \ref{thm:regimeB}. As a
preparatory step, we prove the following result, which, in
combination with \eqref{HD3}, forms the key for proving these
theorems.

\begin{proposition}\label{summation}
Let $Q_{n,k}$ be the numerator polynomial of the $\ZZ$-graded Hilbert
series of $M(n,k)$. Then
\begin{align} \label{eq:sum1}
\frac{Q_{n,k}}{(1-T)^s}&=\sum_{j=0}^\infty \left( (-1)^j\binom{n-s}{k+j}
+ \sum_{t=1}^s \binom{n-t}{k-1}\binom{s-t+j}{s-t}\right)T^{j+k}\\
\notag
&\kern-10pt
=\sum_{j=0}^\infty \Bigg( (-1)^j\binom{n-s}{k+j}\\
&\kern3cm
+ \sum_{\ell=0}^{k-1} \binom {j+\ell}\ell
\binom{n-s-j-\ell-1}{k-\ell-1}\binom{s+j+\ell}{s-1}\Bigg)T^{j+k}.
\label{eq:sum2}
\end{align}
\end{proposition}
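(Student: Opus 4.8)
The plan is to reduce the statement to elementary binomial manipulations, with the first equality being routine and the equivalence of the two right-hand sides carrying the real content. The starting point is the closed form of the numerator polynomial itself: since $\sum_{i=0}^n\binom ni(-T)^i=(1-T)^n$, splitting off the part of degree below $k$ gives
$$Q_{n,k}(T)=(-1)^k(1-T)^n-(-1)^k\sum_{i=0}^{k-1}(-1)^i\binom ni T^i=\sum_{j\ge0}(-1)^j\binom n{k+j}T^{j+k},$$
the last expression being just \eqref{NumSt} re-indexed by $j=i-k$. In particular both asserted formulas hold trivially for $s=0$, their sums over $t$ (respectively over $\ell$) being empty since $\binom{s+j+\ell}{s-1}=\binom{j+\ell}{-1}=0$.

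For \eqref{eq:sum1} I would induct on $s$, using that passing from $s$ to $s+1$ amounts to multiplying the power series $Q_{n,k}/(1-T)^s$ by $1/(1-T)=\sum_{m\ge0}T^m$, i.e.\ to replacing its coefficient sequence by its sequence of partial sums:
$$[T^{j+k}]\,\frac{Q_{n,k}}{(1-T)^{s+1}}=\sum_{i=0}^j\,[T^{i+k}]\,\frac{Q_{n,k}}{(1-T)^s}.$$
Feeding in the inductive form of the right-hand side of \eqref{eq:sum1} and interchanging the order of summation, the alternating part collapses by the standard identity $\sum_{i=0}^j(-1)^i\binom m{p+i}=(-1)^j\binom{m-1}{p+j}+\binom{m-1}{p-1}$ (applied with $m=n-s$, $p=k$), and the inner double sum collapses by the hockey-stick identity $\sum_{i=0}^j\binom{(s-t)+i}{s-t}=\binom{(s+1)-t+j}{(s+1)-t}$. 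The term $\binom{n-s-1}{k-1}$ left over from the first collapse is precisely the new $t=s+1$ summand, so everything reassembles into \eqref{eq:sum1} with $s$ replaced by $s+1$. (As a consistency check, \eqref{eq:sum1} at $s=n$ recovers the $\ZZ$-graded specialization of the multigraded Hilbert series recorded earlier, after grouping monomials by the size of their support.)

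The substance of the proposition is the second equality. Since \eqref{eq:sum1} and \eqref{eq:sum2} share the first term $(-1)^j\binom{n-s}{k+j}$, it suffices to prove the pure binomial identity
$$\sum_{t=1}^s\binom{n-t}{k-1}\binom{s-t+j}{s-t}=\sum_{\ell=0}^{k-1}\binom{j+\ell}\ell\binom{n-s-j-\ell-1}{k-\ell-1}\binom{s+j+\ell}{s-1},$$
or, equivalently (extracting $[T^{j+k}]$ directly from $Q_{n,k}/(1-T)^s$ via the closed form of $Q_{n,k}$ above), that its right-hand side equals $\sum_{\ell=0}^{k-1}(-1)^\ell\binom n{k-1-\ell}\binom{s+j+\ell}{s-1}$. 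I would prove this by bringing both sides into hypergeometric normal form: the right-hand side of this identity is, up to a hypergeometric prefactor, a terminating ${}_3F_2$ evaluated at $1$ whose upper parameter $-(k-1)$ is exactly what truncates it after $k$ terms, while the left-hand side (after the substitution $u=s-t$ and reversal of the order of summation) is likewise a terminating ${}_3F_2$ at argument $1$; the two are then matched by one of Thomae's relations among such series, or equivalently by a short chain of contiguous relations. As a fallback, the identity can be certified mechanically by Zeilberger's algorithm, both sides satisfying the same first-order recurrence in $j$ with matching value at $j=0$. This last step is the main obstacle: unlike \eqref{eq:sum1} it is not a one-line manipulation, and the work lies in picking the right normalization and the transformation linking the two ${}_3F_2$'s; once those are in place, the remainder is bookkeeping.
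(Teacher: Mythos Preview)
Your treatment of \eqref{eq:sum1} is essentially the paper's: induction on $s$, with the hockey-stick identity handling the inner sum and the alternating identity $\sum_{i=0}^j(-1)^i\binom{m}{p+i}=(-1)^j\binom{m-1}{p+j}+\binom{m-1}{p-1}$ producing both the new first term and the new $t=s+1$ summand. No difference there.

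For \eqref{eq:sum2} your route diverges from the paper's and is heavier than necessary. You frame the identity as a match between two terminating ${}_3F_2(1)$ series to be bridged by a Thomae transformation (or, failing that, certified by Zeilberger's algorithm), and you flag this as ``the main obstacle.'' The paper instead derives \eqref{eq:sum2} from \eqref{eq:sum1} by two elementary applications of Chu--Vandermonde: first, after the upper-negation $\binom{n-t}{k-1}=(-1)^{k-1}\binom{-n+t+k-2}{k-1}$, Chu--Vandermonde expands this binomial as $\sum_{\ell}\binom{-s-j+t-1}{\ell}\binom{s+j+k-n-1}{k-\ell-1}$, which separates the $t$-dependence; then the resulting sum over $t$, namely $\sum_{t=1}^s\binom{s+j+\ell-t}{j+\ell}$, collapses by Chu--Vandermonde (hockey-stick) to $\binom{s+j+\ell}{j+\ell+1}$. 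So what you call the hard step is in fact a short, fully explicit computation; your hypergeometric-transformation or WZ approach would work, but trades a five-line elementary argument for machinery whose details you leave unspecified.
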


\begin{proof}
By \eqref{NumSt}, equation~\eqref{eq:sum1} is true for $s=0$. For
the induction, one observes that the term in the inner sum is the
degree $j$ value of the Hilbert function of the free module of rank
$\binom{n-t}{k-1}$ over the polynomial ring in $s-t+1$ variables. In
other words, its sum over $j$ is the Hilbert series of this module.
Multiplication by $1/(1-T)$ increases the number of variables by
$1$. Thus the multiplication by $1/(1-T)$ replaces $s$ by $s+1$ in
these terms, as desired.

In order to complete the proof of \eqref{eq:sum1},
it remains to show that
$$
\frac1{1-T}\sum_{j=0}^\infty (-1)^j\binom{n-s}{k+j}T^{j+k}=
\sum_{j=0}^\infty
\left((-1)^j\binom{n-(s+1)}{k+j}+\binom{n-(s+1)}{k-1}\right)T^{j+k}.
$$
After the replacement of $n-s$ by $n$ this is the case $s=1$, and
the easy verification is left to the reader.

In order to establish the second form \eqref{eq:sum2}, we rewrite the
inner sum in \eqref{eq:sum1} as follows:
\begin{align*}
\sum_{t=1}^s \binom{n-t}{k-1}&\binom{s-t+j}{s-t}
=\sum_{t=1}^s (-1)^{k-1}\binom{-n+t+k-2}{k-1}\binom{s-t+j}{s-t}\\
&=\sum_{t=1}^s (-1)^{k-1}
\sum _{\ell=0} ^{k-1}\binom{-s-j+t-1}{\ell}\binom{s+j+k-n-1}{k-\ell-1}
\binom{s-t+j}{s-t}\\
&=\sum _{\ell=0} ^{k-1}\binom{n-s-j-\ell-1}{k-\ell-1}
\sum_{t=1}^s \binom{s+j+\ell-t}{\ell}\binom{s-t+j}{j}\\
&=\sum _{\ell=0} ^{k-1}\binom{n-s-j-\ell-1}{k-\ell-1}
\binom {j+\ell} \ell
\sum_{t=1}^s \binom{s+j+\ell-t}{j+\ell}\\
&=\sum _{\ell=0} ^{k-1}\binom{n-s-j-\ell-1}{k-\ell-1}
\binom {j+\ell} \ell
\binom{s+j+\ell}{j+\ell+1}.
\end{align*}
Here, to arrive at the second line and at the last line, we used special
instances of the Chu--Vandermonde summation
(cf.\ e.g.\ \cite[Sec.~5.1, (5.27)]{GrKPAA}).
\end{proof}

\begin{remark} \label{rem:imposs}
In hypergeometric terms (cf.\ \cite{SlatAC} for definitions), the
inner sums (over $t$ and $\ell$, respectively) in \eqref{eq:sum1} and
\eqref{eq:sum2} are $_3F_2$-series, namely
\begin{multline*}
\binom{n-1}{k-1}\binom{s+j-1}{s-1}
{} _{3} F _{2} \!\left [ \begin{matrix} { 1 - s, k - n, 1}\\ { 1 - j - s, 1 -
      n}\end{matrix} ; {\displaystyle 1}\right ]
\\=
\binom{n-s-j-1}{k-1}\binom{s+j}{s-1}
{} _{3} F _{2} \!\left [ \begin{matrix} { 1 - k, 1 + j, 1 + j + s}\\ { 2 + j, 1 + j
      - n + s}\end{matrix} ; {\displaystyle 1}\right ].
\end{multline*}
There are no summation formulas available for these $_3F_2$-series,
and therefore one cannot expect that they can be summed in closed
form. Indeed, by looking at special values of $k$ and $s$,
respectively by applying the Gosper--Zeilberger algorithm in order to
find a recurrence for these series and subsequently applying the
Petkov\v sek algorithm to the recurrence (cf.\ \cite{PeWZAA}),
one can {\it prove} that
these series cannot be further simplified. It is for this reason,
that, given $k$ and $n$, it is difficult to find the smallest $s$
such that {\it all\/} the coefficients in the polynomial
\eqref{eq:sum1} (respectively in \eqref{eq:sum2})
are non-negative, that
is, to find the Hilbert depth of $M(n,k)$ for the standard grading (cf.\
\eqref{HD3}).
\end{remark}

If we combine Proposition~\ref{summation} with \eqref{HD3}, then we
obtain a monotonicity property for the Hilbert depth
of the syzygy modules $M(n,k)$.

\begin{corollary}\label{monotone}
For all $k$ one has
$$
\Hdepth_1 M(n,k)\le \Hdepth_1 M(n,k+1).
$$
\end{corollary}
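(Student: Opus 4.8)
The plan is to relate the Hilbert depth of $M(n,k)$ and $M(n,k+1)$ through their standard-graded Hilbert-series numerators and the characterization \eqref{HD3}. Write $s(k)$ for the minimal $s$ such that $Q_{n,k}/(1-T)^s$ is positive, so that $\Hdepth_1 M(n,k)=n-s(k)$; the desired inequality $\Hdepth_1 M(n,k)\le\Hdepth_1 M(n,k+1)$ is thus equivalent to $s(k)\ge s(k+1)$. Concretely, I want to show: if $Q_{n,k}/(1-T)^s$ has only nonnegative coefficients, then $Q_{n,k+1}/(1-T)^s$ also does. Once this implication is established, $s(k+1)\le s(k)$ follows immediately, and we are done.

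To prove the implication I would use the explicit coefficient formula \eqref{eq:sum1} of Proposition~\ref{summation}. The key observation is that the coefficient of $T^{j+k}$ in $Q_{n,k}/(1-T)^s$ and the coefficient of $T^{(j-1)+(k+1)}=T^{j+k}$ in $Q_{n,k+1}/(1-T)^s$ are governed by the same summation, with $k$ shifted by one and the summation index correspondingly shifted. The first terms match up: $(-1)^j\binom{n-s}{k+j}$ appears in both (the $M(n,k)$ side at index $j$, the $M(n,k+1)$ side at index $j-1$). For the second, positive part, in \eqref{eq:sum1} one has $\sum_{t=1}^s\binom{n-t}{k-1}\binom{s-t+j}{s-t}$ for $M(n,k)$ and $\sum_{t=1}^s\binom{n-t}{k}\binom{s-t+j-1}{s-t}$ for $M(n,k+1)$. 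I would like to argue termwise that the $M(n,k+1)$ combination dominates an appropriate shift of the $M(n,k)$ combination; since the alternating first term is identical, nonnegativity for $M(n,k)$ would then force nonnegativity for $M(n,k+1)$. An alternative, and probably cleaner, route is to exhibit a short exact sequence or a direct relation of Hilbert series linking $M(n,k)$ and $M(n,k+1)$: from the Koszul complex one reads off $Q_{n,k}=\binom nk T^k-T\,Q_{n,k+1}$, i.e. $Q_{n,k+1}=\bigl(\binom nk T^k-Q_{n,k}\bigr)/T=\binom nk T^{k-1}-Q_{n,k}/T$. Dividing by $(1-T)^s$, the positivity of $Q_{n,k}/(1-T)^s$ must be leveraged together with the monotone structure of its coefficient sequence to deduce positivity of $Q_{n,k+1}/(1-T)^s$.

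The main obstacle is precisely this last step: the relation $Q_{n,k+1}=\binom nk T^{k-1}-Q_{n,k}/T$ involves a \emph{subtraction}, so positivity of $Q_{n,k}/(1-T)^s$ does not transfer to $Q_{n,k+1}/(1-T)^s$ by any formal positivity manipulation. What saves us is that the coefficients of $Q_{n,k}/(1-T)^s$ are not merely nonnegative but, by \eqref{eq:sum1}, have a specific shape in which the ``free part'' $\sum_{t=1}^s\binom{n-t}{k-1}\binom{s-t+j}{s-t}$ grows with $j$ (it is a polynomial in $j$ of degree $s-1$ with positive leading coefficient) while the alternating part $(-1)^j\binom{n-s}{k+j}$ is eventually dominated. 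So I would split the analysis at the threshold $j$ beyond which $\binom{n-s}{k+j}=0$ (namely $k+j>n-s$): for large $j$ the $M(n,k+1)$ coefficient is a manifestly positive binomial sum, while for the finitely many small $j$ one must check positivity directly, using the inductive structure of Proposition~\ref{summation} and the hypothesis that $Q_{n,k}/(1-T)^s$ is positive. Carrying out this finite-range comparison—establishing the needed inequality between the two $_3F_2$-type sums at each small $j$—is the technical heart; I expect it to reduce, after invoking the Chu--Vandermonde identities already used in the proof of Proposition~\ref{summation}, to a transparent binomial-coefficient inequality, but pinning down the right normalization so that the alternating terms cancel exactly is where care is required.
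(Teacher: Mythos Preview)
Your framework is exactly right: by \eqref{HD3} it suffices to show that positivity of $Q_{n,k}/(1-T)^s$ implies positivity of $Q_{n,k+1}/(1-T)^s$, and \eqref{eq:sum1} is the right tool. However, the comparison you set up contains a genuine error, and you miss the very short argument the paper has in mind.

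Your alignment at the \emph{same degree} $T^{j+k}$ compares the index-$j$ coefficient for $M(n,k)$ with the index-$(j-1)$ coefficient for $M(n,k+1)$. You write that ``the first terms match up,'' but they do not: the alternating term on the $M(n,k+1)$ side is $(-1)^{j-1}\binom{n-s}{k+j}$, which has the \emph{opposite} sign. Consequently, the ``hard'' (odd-index) coefficients for $M(n,k+1)$ are being paired with the trivially positive (even-index) coefficients for $M(n,k)$, and the hypothesis gives you nothing. The detour through $Q_{n,k+1}=\binom{n}{k}T^k-Q_{n,k}$ (your formula has a spurious factor of $T$, by the way) inherits exactly the same sign obstruction, as you yourself observe; splitting into large and small $j$ does not cure this, because for the small odd $j$ you still have no leverage from the assumption.

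The paper's argument instead compares the two series at the \emph{same index $j$}, so that the sign $(-1)^j$ is common. For odd $j$, the negative term changes from $\binom{n-s}{k+j}$ to $\binom{n-s}{k+j+1}$, a ratio of $\dfrac{n-s-k-j}{k+j+1}$; the $t$-th positive summand changes from $\binom{n-t}{k-1}\binom{s-t+j}{s-t}$ to $\binom{n-t}{k}\binom{s-t+j}{s-t}$, a ratio of $\dfrac{n-t-k+1}{k}$. Since $t\le s$ and $j\ge0$, one has
\[
\frac{n-s-k-j}{k+j+1}\;<\;\frac{n-s-k+1}{k}\;\le\;\frac{n-t-k+1}{k},
\]
so every positive summand grows by at least as large a factor as the negative term. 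Hence if the index-$j$ coefficient is nonnegative for $M(n,k)$, it is nonnegative for $M(n,k+1)$. That is the entire proof; no Chu--Vandermonde manipulation or case split is needed.
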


\begin{proof}
For $s$ fixed, the quotient of the negative terms on the right-hand
side of \eqref{eq:sum1} is smaller than the quotients of the
corresponding positive terms.
\end{proof}

\section{An asymptotic discussion}\label{Asympt}

\def\al{\alpha}
\def\be{\beta}
\def\de{\delta}
\def\ep{\varepsilon}
\def\ga{\gamma}
\def\Ga{\Gamma}
\def\({\left(}
\def\){\right)}

In view of the apparent impossibility (addressed in
Remark~\ref{rem:imposs}) of finding a compact expression for
$\Hdepth_1 M(n,k)$, the next best result that one can hope for is
asymptotic approximations of $\Hdepth_1 M(n,k)$ as $n$ becomes
large. This will be the subject of this final section.
Our results, given in Theorems~\ref{thm:regimeA} and \ref{thm:regimeB}
below, show that the general bounds in Corollary~\ref{StdepthMk} and
Proposition~\ref{Hbound} are far from
the truth for large $n$, that is, they can be substantially improved.
We shall discuss two ``regimes" for large $n$.
In the first part of this section,
we let $k$ be fixed, while $n$ tends to $\infty$.
On the other hand, in the second part, we let both $k$ and $n$ tend to
$\infty$ at a fixed rate.

\subsection{The case of fixed $k$ and large $n$}
The theorem below provides rather precise asymptotics for $\Hdepth_1
M(n,k)$ for the case where $k$ is fixed and $n$ tends to $\infty$.

\begin{theorem} \label{thm:regimeA}
For a fixed positive integer $k$, we have
\begin{multline} \label{eq:regimeA}
\Hdepth _1 M(n,k)=\frac {1} {2}n+\frac {1} {2}\sqrt{(k-1)n\log n}\\
+\frac {1} {4}\sqrt{\frac {(k-1)n} {\log n}}\log\log n
+o\left(\sqrt{\frac {n} {\log n}}\log\log n\right),
\end{multline}
as $n\to\infty$.
\end{theorem}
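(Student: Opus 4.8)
The plan is to combine the criterion \eqref{HD3} with the explicit expansion in Proposition~\ref{summation}. By \eqref{HD3}, $\Hdepth_1 M(n,k)=n-u$, where $u=u(n,k)$ is the least $s$ such that every coefficient of $Q_{n,k}/(1-T)^s$ is nonnegative. So the task is purely asymptotic analysis of the coefficient of $T^{j+k}$ in \eqref{eq:sum1}, namely
\begin{equation}\label{eq:Acoeff}
c_{s,j}:=(-1)^j\binom{n-s}{k+j}+\sum_{t=1}^s\binom{n-t}{k-1}\binom{s-t+j}{s-t},
\end{equation}
and one must locate the smallest $s$ for which $c_{s,j}\ge0$ for all $j\ge0$. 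The first, alternating, term is negative precisely for $j$ odd, and its absolute value is largest (among odd $j$) near $j\approx(n-s-k)/2$; the second term is a positive polynomial-growth contribution. The heart of the matter is the borderline $j$, so I would set $j$ odd and look for the threshold $s$ at which the positive sum just overtakes the binomial $\binom{n-s}{k+j}$.

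The key steps, in order: (1) Rewrite the positive inner sum in closed-enough form. Using the second expression \eqref{eq:sum2}, the dominant contribution for large $n$ comes from the top term $\ell=k-1$, giving roughly $\binom{j+k-1}{k-1}\binom{s+j+k-1}{s-1}$ times a slowly varying factor; I would make this precise and show the lower-order $\ell$ are negligible in the relevant range of $j$. (2) Write $s=\tfrac12 n(1+\delta)$ with $\delta\to0$ and $j$ odd near its worst value; approximate both terms of \eqref{eq:Acoeff} by their logarithms via Stirling. The alternating term $\binom{n-s}{k+j}$ is largest when $j$ is close to $\tfrac12(n-s)-\tfrac12 k$; substituting $s=\tfrac12n(1+\delta)$ this is $j\approx\tfrac14 n(1-\delta)$. (3) Compare exponents. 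The binomial $\binom{n-s}{k+j}\approx 2^{\,n-s}$ up to polynomial factors when $j$ is central, while the positive term behaves like $\binom{s+j}{s}$ up to a factor of polynomial order $n^{k-1}$; so the crossover condition becomes, on the logarithmic scale, $(n-s)\log2\le (s+j)\log(s+j)-s\log s-j\log j+(k-1)\log n+O(\log n)$. (4) Insert $s=\tfrac12n(1+\delta)$ and the central $j$, expand to second order in $\delta$. The leading terms $\tfrac12n\log2$ cancel against the leading part of the right-hand side, the first-order terms in $\delta$ cancel as well (this is why the answer is $\tfrac12 n$ to leading order), and one is left with a quadratic: the $-\,c\,\delta^2 n$ coming from the entropy expansion must be balanced against the $+(k-1)\log n$, forcing $\delta^2 n\asymp (k-1)\log n$, i.e. $\delta\asymp\sqrt{(k-1)(\log n)/n}$, hence $n-s\sim\tfrac12n-\tfrac12\sqrt{(k-1)n\log n}$, which is the first two terms of \eqref{eq:regimeA}. (5) Push the expansion one more order to capture the $\log\log n$ correction: the polynomial prefactors (the $\binom{j+k-1}{k-1}\sim j^{k-1}/(k-1)!\asymp n^{k-1}$ and similar) contribute $-(k-1)\log\log n$-type terms into the balance, shifting $\delta$ by a relative amount of order $\log\log n/\log n$, which produces the $\tfrac14\sqrt{(k-1)n/\log n}\,\log\log n$ term; everything beyond that is absorbed into the stated error $o\!\left(\sqrt{n/\log n}\,\log\log n\right)$.

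The main obstacle I expect is step (4)–(5): controlling the expansion uniformly in $j$ (not just at the single central value) so that one genuinely verifies $c_{s,j}\ge0$ for \emph{all} $j$ at the claimed threshold $s$, and simultaneously exhibits a $j$ violating it just below the threshold. This requires a careful second-order saddle-point/Laplace estimate of the alternating term together with a matching lower bound on the positive sum, with explicit tracking of the polynomial-in-$n$ prefactors, since it is exactly those prefactors (the $\binom{n-1}{k-1}$-type factors, i.e. the appearance of $k-1$) that govern both the $\sqrt{(k-1)n\log n}$ main correction and the $\log\log n$ refinement. A secondary nuisance is handling the parity: one must check that even $j$ never cause trouble (there the alternating term helps), and that the odd-$j$ analysis is not spoiled by the discreteness of $j$ near the maximum. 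Once the uniform two-sided bounds are in place, inverting $s\mapsto\Hdepth_1 M(n,k)=n-s$ and collecting the error terms is routine.
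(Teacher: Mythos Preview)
Your overall strategy---use \eqref{HD3} together with Proposition~\ref{summation}, isolate the $\ell=k-1$ term as dominant in \eqref{eq:sum2}, and compare the positive sum against the alternating binomial $\binom{n-s}{k+j}$---is the same as the paper's, and that part is fine. The genuine gap is in your steps~(2)--(4): you have mis-identified the critical $j$.

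You take the worst case to be the $j$ at which $\binom{n-s}{k+j}$ is maximal, i.e.\ the central value $j\approx\tfrac12(n-s)$. But at that $j$ (with $s\approx n/2$, so $j\approx n/4$) the positive term already dominates the binomial by an \emph{exponential} factor: one has $\log\binom{n-s}{k+j}\approx\tfrac{n}{2}\log 2\approx 0.347\,n$, whereas $\log\binom{s+j}{j}=\log\binom{3n/4}{n/4}\approx \tfrac{3n}{4}\,H(\tfrac13)\approx 0.477\,n$. So the ``leading terms $\tfrac12 n\log 2$'' do \emph{not} cancel against the right-hand side in your step~(4); the inequality is satisfied there with $\Theta(n)$ to spare, and no information about the threshold $s$ can be extracted from the central~$j$.

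The actual bottleneck occurs at a much smaller $j$, of order $\sqrt{n\log n}$, where both sides are of comparable (sub-exponential in $n$) size and the $j^{k-1}$ prefactor matters. The paper locates it by treating the quotient
\[
\frac{(j+1)_{k-1}}{(k-1)!}\,\frac{\Gamma(s+j+k)\,\Gamma(n-s-j-k+1)}{\Gamma(s)\,\Gamma(n-s+1)}
\]
as a function of a \emph{real} variable $j$ and setting its logarithmic derivative to zero (a digamma equation). Writing $s=\tfrac{n}{2}-s_1$ with $s_1\sim\tfrac12\sqrt{(k-1)n\log n}$ and $j=s_1+j_1$, one is forced to $j_1\sim-\tfrac12\sqrt{(k-1)}\,\sqrt{n/\log n}$, i.e.\ the minimizer is
\[
j_0=\tfrac12\sqrt{(k-1)n\log n}\Bigl(1-\tfrac{1}{\log n}+o\bigl(\tfrac{1}{\log n}\bigr)\Bigr).
\]
Only after inserting this $j_0$ together with the ansatz
\[
s=\tfrac{n}{2}-\tfrac12\sqrt{(k-1)n\log n}-\delta\sqrt{\tfrac{(k-1)n}{\log n}}\,\log\log n
\]
into the quotient and applying Stirling does one see the threshold $\delta=\tfrac14$ emerge (the quotient is $\ll1$ for $\delta>\tfrac14$ and $\gg1$ for $\delta<\tfrac14$), which gives all three terms in \eqref{eq:regimeA}. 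Your steps (4)--(5), carried out at the central $j$, would not produce this; the entire second and third terms of \eqref{eq:regimeA} come from the interplay between the $j^{k-1}$ factor and the gamma ratio at $j=j_0=o(n)$, not from a quadratic expansion around $j\approx n/4$.
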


\begin{proof}
By Theorem~\ref{Biro}, we know that \eqref{eq:regimeA} is correct if
$k=1$. We may therefore assume that $k\ge2$ in the sequel.

In all of this proof, we let $k$ be fixed and
\begin{equation} \label{eq:sasy}
s=\frac {1} {2}n-\frac {1} {2}\sqrt{(k-1)n\log n}
-\de\sqrt{\frac {(k-1)n} {\log n}}\log\log n,\quad \quad
\text {as $n\to\infty$},
\end{equation}
where $\de$ is a fixed positive real number.
We shall prove that the quotient of
\begin{equation} \label{eq:sum2A}
\sum_{\ell=0}^{k-1} \binom {j+\ell}\ell
\binom{n-s-j-\ell-1}{k-\ell-1}\binom{s+j+\ell}{s-1}
\end{equation}
and
\begin{equation} \label{eq:binomA}
\binom{n-s}{k+j}
\end{equation}
is (asymptotically) less than $1$ {\it for some} $j$ if $\delta>1/4$,
and larger than $1$ {\it for all\/} $j$ with $0\le j\le
n-s-k$ if $\delta<1/4$. Clearly, in view of Proposition~\ref{summation} and
\eqref{HD3}, this would establish the assertion of the theorem.

In order to establish this claim, we proceed in several steps.
In the first step, we show that, for large $n$,
the summands in the sum \eqref{eq:sum2A} can be bounded by a constant
times the term for $\ell=k-1$,
so that it suffices to prove the above claim for the quotient
\begin{multline} \label{eq:quotA}
\binom {j+k-1} {k-1}\binom{s+j+k-1}{s-1}
\bigg/
\binom{n-s}{k+j}\\=
\frac {(j+1)_{k-1}} {(k-1)!}\,
\frac {\Ga(s+j+k)} {\Ga(s)}\,\frac {\Ga(n-s-j-k+1)} {\Ga(n-s+1)}.
\end{multline}
Here, $(j+1)_{k-1}$ is the standard notation for shifted factorials
(Pochhammer symbols),
$$(j+1)_{k-1}=(j+1)(j+2)\cdots (j+k-1),$$
and $\Ga(x)$ denotes the classical gamma function (cf.\ \cite{SlatAC}).

In the second step, we consider the right-hand side of
\eqref{eq:quotA} as a continuous function in the {\it real\/}
variable $j$, $0\le j\le n-s-k$, and we determine the (asymptotic)
value of $j$ for which the expression \eqref{eq:quotA} is minimal.
Finally, in the third step, we estimate \eqref{eq:quotA} as
$n\to\infty$ for this value of $j$. The conclusion will be that it
will be less than $1$ if $\de>1/4$, while it will be larger than $1$
if $\de<1/4$.

\smallskip
{\sc Step 1}. The quotient of the $(\ell+1)$-st and the $\ell$-th
summand in \eqref{eq:sum2A} equals
\begin{equation} \label{eq:l+1/l}
\frac {(j+\ell+1)} {(\ell+1)}\,
\frac {(k-\ell-1)} {(n-s-j-\ell-1)}\,
\frac {(s+j+\ell+1)} {(j+\ell+2)},\quad \quad
\ell=0,1,\dots,k-2,
\end{equation}
for which we have
\begin{align*}
\frac {(j+\ell+1)} {(\ell+1)}\,
\frac {(k-\ell-1)} {(n-s-j-\ell-1)}\,&
\frac {(s+j+\ell+1)} {(j+\ell+2)}\\
&\ge
\frac {j+1} {(k-1)}\,
\frac {1} {(n-s-1)}\,
\frac {(s+1)} {(j+k)}\\
&\ge \frac {1} {2k(k-1)}
\end{align*}
for $n$ large enough, where we have taken into account our choice
\eqref{eq:sasy} of $s$. Hence, all the summands in \eqref{eq:sum2A}
are bounded by a constant times the term for $\ell=k-1$.

\smallskip
{\sc Step 2}. The reader should recall that $k$ is fixed, $s$ is
given by \eqref{eq:sasy}, and that we consider large $n$.
It is a simple fact that the product
$$\Ga(s+j+k)\,\Ga(n-s-j-k+1)$$
occurring in \eqref{eq:quotA} attains its minimum when the arguments
of the gamma functions are equal to each other, that is, for
$j=\frac {n+1} {2}-s-k$. It is then not difficult to see that this
implies that, as a function in $j$, the expression \eqref{eq:quotA}
cannot attain its minimum at the boundary of the defining interval
for $j$, that is, at $j=0$ or at $j=n-s-k$. (The term $(j+1)_{k-1}$
cannot compensate the difference in orders of magnitude of
\eqref{eq:quotA} at $j=\frac {n+1} {2}-s-k$ and at $j=0$,
respectively at $j=n-s-k$.) Therefore, in order to determine places
of minima of the function \eqref{eq:quotA} (in $j$), we compute its
logarithmic derivative with respect to $j$, which we shall
subsequently equate to $0$. Let $\psi(x)$ denote the classical {\it
digamma function}, which, by definition, is the logarithmic
derivative of the gamma function. Using this notation, the
logarithmic derivative of \eqref{eq:quotA} is given by
\begin{equation*} %\label{eq:quotder}
\sum _{i=1} ^{k-1}\frac {1} {j+i}
+\psi(s+j+k)-\psi(n-s-j-k+1).
\end{equation*}
Let us for the moment write $s=\frac {n} {2}-s_1$, where $s_1=o(n)$,
for short. Then, equating the above logarithmic derivative to $0$
means to solve the equation
\begin{equation} \label{eq:quotder}
\sum _{i=1} ^{k-1}\frac {1} {j+i}
+\psi\(\frac {n} {2}-s_1+j+k\)-\psi\(\frac {n} {2}+s_1-j-k+1\)=0
\end{equation}
for $j$. For our purposes, it will not be necessary to determine
solutions $j$ exactly (which is impossible anyway), but it suffices
to get appropriate asymptotic estimates.

For the following considerations we need the first few terms in
the asymptotic series for
the digamma function (cf.\ \cite[1.18(7)]{ErdeAA}):
\begin{equation} \label{eq:psi}
\psi(x)=\log x-\frac {1} {2x}+O\(\frac {1}
{x^2}\),
\quad \quad \text {as $x\to\infty$}.
\end{equation}
If we suppose that $j\sim\al n$ as $n\to\infty$, where $\al>0$,
then, using \eqref{eq:psi}, the limit of the left-hand side of
\eqref{eq:quotder} as $n\to\infty$ can be computed: it equals
$$\log\frac {\frac {1} {2}+\al} {\frac {1} {2}-\al}\ne0,$$
a contradiction to the equation \eqref{eq:quotder}. Hence, we must
have $j=o(n)$ as $n\to\infty$.

Let us, for convenience, write $j=s_1+j_1$. Then \eqref{eq:quotder}
becomes
\begin{equation} \label{eq:quotder2}
\sum _{i=1} ^{k-1}\frac {1} {s_1+j_1+i}
+\psi\(\frac {n} {2}+j_1+k\)-\psi\(\frac {n} {2}-j_1-k+1\)=0.
\end{equation}
Using \eqref{eq:psi}, the estimate
\begin{align*}
\log \(\frac {n} {2}+j_1+k\)&=\log \frac {n} {2}
+\log\(1+\frac {2(j_1+k)} {n}\)\\
&=\log \frac {n} {2}
+\frac {2(j_1+k)} {n}+O\(\frac {j_1^2} {n^2}\),
\quad \quad \text{as $n\to0$},
\end{align*}
and an analogous estimate for $\log\big(\frac {n} {2}-j_1-k+1\big)$,
the left-hand side of \eqref{eq:quotder2} is
asymptotically
\begin{equation} \label{eq:quotderasy}
\frac {k-1} {s_1+j_1}+O\(\frac {1} {(s_1+j_1)^2}\)
+\frac {4j_1} {n}+O\(\frac {j_1^2} {n^2}\).
\end{equation}
If the equation \eqref{eq:quotder2} wants to be true, then the
asymptotically largest terms in \eqref{eq:quotderasy} must cancel
each other. If we suppose that $j_1\ll \sqrt{n/\log n}$, then, taking
into account that $s_1\sim \frac {1} {2}\sqrt{(k-1)n\log n}$, the
term $(k-1)/(s_1+j_1)$ would be asymptotically strictly larger than all
other terms in \eqref{eq:quotderasy}, a contradiction. On the other
hand, if we suppose that $j_1\gg \sqrt{n/\log n}$, then the
term $4j_1/n$ would be asymptotically strictly larger than all
other terms in \eqref{eq:quotderasy}, again a contradiction. Hence, we must
have $j_1\sim \al \sqrt{n/\log n}$ for some $\al>0$. If we
substitute this in \eqref{eq:quotderasy} and equate (asymptotically)
the first and the third term in this expression, then we obtain
$\al=-\frac {1} {2}\sqrt{k-1}$.

In summary, under our assumptions, the value(s)\footnote{It could be
proved, using estimates for the derivative of \eqref{eq:quotder} with
respect to $j$, that, for $n$ large enough, there is a unique zero of
\eqref{eq:quotder}, and, hence, a unique $j$ which minimizes
\eqref{eq:quotA}. Since we do not really need this fact, we omit its
proof.} for $j$ which
minimize the expression \eqref{eq:quotA} is (are) asymptotically equal to
\begin{equation} \label{eq:j0}
j_0=\frac {1} {2}\sqrt{(k-1)n\log n}\(1-\frac {1} {\log n}+o\(\frac
{1} {\log n}\)\), \quad \quad \text {as $n\to\infty$}.
\end{equation}

\smallskip
{\sc Step 3}. Now we substitute \eqref{eq:sasy} and \eqref{eq:j0} in
\eqref{eq:quotA}, and determine the asymptotic behaviour of the
resulting expression. For the term $(j_0+1)_{k-1}$, we use the
estimation
\begin{align*}
\log \big((j_0+1)&_{k+1}\big)=\log\(j_0^{k-1}\(1+O\(\frac {1} {j_0}\)\)\)\\
&=(k-1)\log j_0+O\(\frac {1} {j_0}\)\\
&=(k-1)\(\frac {1} {2}\log n+\frac {1} {2}\log\log n
+\frac {1} {2}\log(k-1)-\log2\)+o\(1\).
\end{align*}
In order to approximate the gamma functions in \eqref{eq:quotA},
we need Stirling's formula in the
form
\begin{equation} \label{eq:Stirling}
\log \Ga(x)=\(x-\frac {1} {2}\)\log x-x+\frac {1} {2}\log(2\pi)+o\(1\),
\quad \quad \text{as $x\to\infty$}.
\end{equation}
Writing, as earlier,
$s=\frac {n} {2}-s_1$, application of \eqref{eq:Stirling} to the term
$\Ga(s)$ gives
\begin{align*}
\Ga(s)&=\(\frac {n} {2}-s_1-\frac {1} {2}\)\log\(\frac {n} {2}-s_1\)
-\(\frac {n} {2}-s_1\)+\frac {1} {2}\log(2\pi)+o\(1\)\\
&=\(\frac {n} {2}-s_1-\frac {1} {2}\)\(\log\frac {n} {2}
-\frac {2s_1} {n}-\frac {1} {2}\frac {(2s_1)^2} {n^2}+
O\(\frac {s_1^3} {n^3}\)\)\\
&\kern6cm
-\(\frac {n} {2}-s_1\)+\frac {1} {2}\log(2\pi)+o\(1\).
\end{align*}
The terms $\Ga(s+j+k)$, $\Ga(n-s-j-k+1)$, and $\Ga(n-s+1)$ are treated
analogously. If everything is put together, after a considerable
amount of simplification, we obtain
\begin{multline} \label{eq:quotasy}
\frac {(j+1)_{k-1}} {(k-1)!}\,
\frac {(s+j+k-1)!} {(s-1)!}\,\frac {(n-s-j-k)!} {(n-s)!}\\
=\exp\bigg((k-1)\(\frac {1} {2}-2\de\)\log\log n
+\frac {1} {2}(k-1)\log(k-1)\kern3cm\\
-(k-1)\log 2-\log\big((k-1)!\big)
+o(1)\bigg),\quad \quad \text {as $n\to\infty$}.
\end{multline}
We can now clearly see that the right-hand side is $\ll 1$
if $\delta>1/4$, and $\gg 1$ if $\delta<1/4$. This completes the
proof of the theorem.
\end{proof}

\begin{remark}
The alert reader may wonder whether the estimation given
by the right-hand side of \eqref{eq:regimeA} provides a lower or
upper bound for $\Hdepth _1 M(n,k)$. We shall now show that
(at least for $k\ge4$) it is indeed an upper bound, that is, for
fixed $k\ge4$ and large enough $n$, we have
\begin{equation} \label{eq:regimeA1/4}
\Hdepth _1 M(n,k)\le\frac {1} {2}n+\frac {1} {2}\sqrt{(k-1)n\log n}
+\frac {1} {4}\sqrt{\frac {(k-1)n} {\log n}}\log\log n.
\end{equation}

To see this, we return to
\eqref{eq:l+1/l}, which expresses the quotient of the $(\ell+1)$-st and
the $\ell$-th summand in \eqref{eq:sum2A}. Using this expression,
we see that, for $j=j_0$ (cf.\ \eqref{eq:j0}),
this quotient is asymptotically equal to
$$
\frac {k-\ell-1} {\ell+1}+o(1),\quad \quad
\ell=0,1,\dots,k-2,
$$
as $n\to\infty$. If we denote the $\ell$-th summand in the sum
\eqref{eq:sum2A} by $t_\ell$, then this implies that
$$t_\ell=t_{k-1}\(\binom {k-1}\ell + o(1)\).$$
Thus, we infer that the sum in \eqref{eq:sum2A} is asymptotically
equal to
$$
\sum _{\ell=0} ^{k-1}t_\ell=t_{k-1}
\sum _{\ell=0} ^{k-1}\(\binom {k-1}\ell + o(1)\)
=\binom {j+k-1}{k-1}\binom{s+j+\ell}{s-1}\big(2^{k-1}+o(1)\big).
$$
If we combine this with \eqref{eq:quotasy}, in which we computed the
asymptotics of the quotient of $t_{k-1}$ and \eqref{eq:binomA} with
$s$ given by \eqref{eq:sasy} and $j=j_0$, then
we obtain that the quotient of the sum \eqref{eq:sum2A}
and the binomial coefficient \eqref{eq:binomA}, where $s$ is given by
\eqref{eq:sasy} with $\de$ specialized to $1/4$ and $j=j_0$, is equal to
$$\exp\bigg(\frac {1} {2}(k-1)\log(k-1)-\log\big((k-1)!\big)
+o(1)\bigg),\quad \quad \text {as $n\to\infty$}.
$$
It is not difficult to show that, for $k\ge4$, we have
$$\frac {1} {2}(k-1)\log(k-1)-\log\big((k-1)!\big)<0.$$
In view of \eqref{HD3}, this implies \eqref{eq:regimeA1/4}.
We expect the same to be true as well for $k=2$ and $k=3$, but we did
not perform the necessary asymptotic calculations using
longer asymptotic series.
\end{remark}

\subsection{The case of large $n$ and $k$}
In this part, we consider the case where both $k$ and $n$ tend to
$\infty$ at a fixed rate, say $k= \be n+o(n)$ with $ \be >0$. We
shall see that then $\Hdepth_1 M(n,k)\sim (1-\ga) n$, where $\ga\le
\frac {1} {2}-\be$. (See Theorem~\ref{thm:regimeB} for the exact
definition of $\ga$, and Remarks~\ref{rem:regimeB}.(2) for a graph
of $\ga$ as a function in $\be$.) Note that this estimate is an
(asymptotic) improvement of Corollary~\ref{StdepthMk}, which only
yields $\Hdepth_1 M(n,k)\ge \(\frac {1} {2}+\frac{ \be
}{2}+o(1)\)n$.

Again, our starting point is \eqref{HD3} in combination with
Proposition~\ref{summation}. We begin by providing an asymptotic estimate
for the isolated binomial coefficient in \eqref{eq:sum1}.

\begin{lemma} \label{lem:3}
Let $k= \be n+o(n)$, $j=\al n+o(n)$, and $s= \ga  n+o(n)$,
where $\al, \be , \ga $ are positive real numbers not exceeding $1$.
Then, as $n\to\infty$, we have
\begin{multline}
\binom {n-s} {k+j}=\(\frac {(1- \ga )^{1- \ga }}
{(\al+ \be )^{\al+ \be }
(1-\al- \be - \ga )^{1-\al- \be - \ga }}\)^{n}\\
\times\text {\em asymptotically smaller terms}.
\label{eq:asy7}
\end{multline}
\end{lemma}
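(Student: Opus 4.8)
The plan is to apply Stirling's formula directly to the three factorials in
$\binom{n-s}{k+j}=\dfrac{(n-s)!}{(k+j)!\,(n-s-k-j)!}$
and to keep track of which contributions survive in the exponent up to an error of order $o(n)$. Write $s=\ga n+\sigma$, $k=\be n+\kappa$, $j=\al n+\iota$ with $\sigma,\kappa,\iota=o(n)$, so that the three relevant arguments are $n-s=(1-\ga)n-\sigma$, $k+j=(\al+\be)n+\kappa+\iota$, and $n-s-k-j=(1-\al-\be-\ga)n-\sigma-\kappa-\iota$. I will assume, as holds in all the intended applications, that $0<\ga<1$ and $1-\al-\be-\ga>0$ (so that all three arguments tend to $\infty$); the degenerate case $1-\al-\be-\ga=0$ is disposed of separately with the convention $0^0=1$.

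First I would use Stirling's formula in the form \eqref{eq:Stirling}, rewriting
\[
\log\binom{n-s}{k+j}=\log\Ga(n-s+1)-\log\Ga(k+j+1)-\log\Ga(n-s-k-j+1),
\]
and noting that, since every argument grows linearly in $n$, each term equals $N\log N-N+O(\log n)$ with $N$ the corresponding argument; the three linear ``$-N$'' contributions cancel up to $O(1)$ because $(n-s)=(k+j)+(n-s-k-j)$. For each $N$, say $N=(1-\ga)n-\sigma$, one has $\log N=\log n+\log(1-\ga)+o(1)$, whence $N\log N=(1-\ga)n\log n+(1-\ga)n\log(1-\ga)-\sigma\log n+o(n)$, and similarly for the other two arguments. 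Collecting the coefficients of $n\log n$ gives $(1-\ga)-(\al+\be)-(1-\al-\be-\ga)=0$, so those terms cancel; collecting the coefficients of $\log n$ coming from $\sigma,\kappa,\iota$ gives $-\sigma-(\kappa+\iota)-(-\sigma-\kappa-\iota)=0$, so these cancel too. What remains is
\[
\log\binom{n-s}{k+j}=n\bigl((1-\ga)\log(1-\ga)-(\al+\be)\log(\al+\be)-(1-\al-\be-\ga)\log(1-\al-\be-\ga)\bigr)+o(n),
\]
and exponentiating produces exactly the base in \eqref{eq:asy7}, the ``asymptotically smaller terms'' being a factor $e^{o(n)}$ that absorbs both the $o(n)$ deviations of $s,k,j$ from their linear parts and the polynomial-in-$n$ factors coming from the $\tfrac12\log(2\pi x)$ contributions in Stirling's formula.

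The computation is routine and I expect no genuine obstacle; the one point that needs a moment's care is precisely the cancellation of the terms of the shape $(\text{error})\cdot\log n$. A priori a single such term like $\sigma\log n$ is only $o(n)\log n$ and would not be absorbed into the claimed $o(n)$ error in the exponent, but the three contributions of this kind sum to zero identically, as a consequence of the trivial identity $n-s=(k+j)+(n-s-k-j)$; once this is observed, there is nothing further to check.
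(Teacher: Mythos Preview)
Your proof is correct and follows exactly the approach the paper takes: the paper's own proof is the single sentence ``This is a simple consequence of Stirling's formula \eqref{eq:Stirling}.'' You have simply spelled out the details, including the crucial (but indeed routine) observation that the potentially dangerous terms of order $(\text{error})\cdot\log n$ cancel identically.
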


\begin{proof}
This is a simple consequence of Stirling's formula
\eqref{eq:Stirling}.
\end{proof}

Next, we provide an asymptotic estimate
for the inner sum in \eqref{eq:sum1}.

\begin{lemma} \label{lem:4}
Let $k= \be n+o(n)$, $j=\al n+o(n)$, and $s= \ga  n+o(n)$,
where $\al, \be , \ga $ are positive real numbers not exceeding $1$.
Then, as $n\to\infty$, we have
\begin{multline}
\sum _{t=1} ^s\binom {n-t} {k-1}\binom {s+j-t} {s-t}=
\(
\frac {(\al+ \ga )^{\al+ \ga }}
{\al^{\al} \be ^{ \be } \ga ^{ \ga }(1- \be )^{1- \be }}\)
^{n}\\
\times\text {\em asymptotically smaller terms}.
\label{eq:asy8}
\end{multline}
\end{lemma}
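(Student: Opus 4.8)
The plan is to read off \eqref{eq:asy8} as a Laplace-type (largest-summand) estimate in the index $t$ of the sum. First I would write $t=\tau n$ and apply Stirling's formula \eqref{eq:Stirling} to the two binomial coefficients $\binom{n-t}{k-1}$ and $\binom{s+j-t}{s-t}$; since $k=\be n+o(n)$, $j=\al n+o(n)$, $s=\ga n+o(n)$, this gives that the $t$-th summand equals $\exp\bigl(nf(\tau)+o(n)\bigr)$, where
\begin{multline*}
f(\tau)=(1-\tau)\log(1-\tau)-\be\log\be-(1-\tau-\be)\log(1-\tau-\be)\\
+(\al+\ga-\tau)\log(\al+\ga-\tau)-(\ga-\tau)\log(\ga-\tau)-\al\log\al,
\end{multline*}
and $\tau$ runs over $[0,\rho]$ with $\rho=\min(\ga,1-\be)$ (the cut-off $1-\be$ comes from $\binom{n-t}{k-1}=0$ once $t>n-k+1$). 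Since the sum has only $O(n)$ terms, each bounded by $\exp\bigl(n\max_{[0,\rho]}f+o(n)\bigr)$, while the single term $t=1$ already has order $\exp\bigl(nf(0)+o(n)\bigr)$ by a direct Stirling estimate of $\binom{n-1}{k-1}\binom{s+j-1}{s-1}$, a routine Laplace argument shows the full sum is $\exp\bigl(n\max_{[0,\rho]}f+o(n)\bigr)$. It then remains to prove $\max_{[0,\rho]}f=f(0)$ and that $\exp\bigl(f(0)\bigr)$ is exactly the base appearing in \eqref{eq:asy8}.

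The second point is an immediate substitution: at $\tau=0$,
\[
f(0)=(\al+\ga)\log(\al+\ga)-\al\log\al-\be\log\be-\ga\log\ga-(1-\be)\log(1-\be),
\]
and exponentiating reproduces the factor in \eqref{eq:asy8} verbatim. For the first point I would differentiate, getting
\[
f'(\tau)=\log\frac{(1-\tau-\be)(\ga-\tau)}{(1-\tau)(\al+\ga-\tau)} .
\]
Clearing the positive denominators, $f'(\tau)\le 0$ is equivalent to $(1-\tau-\be)(\ga-\tau)\le(1-\tau)(\al+\ga-\tau)$, and after cancelling the common term $(1-\tau)(\ga-\tau)$ this reduces to $\tau(\al+\be)\le\al+\be\ga$, i.e.\ to $\tau\le\tau^\ast:=(\al+\be\ga)/(\al+\be)$. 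A short computation gives $\tau^\ast-\ga=\al(1-\ga)/(\al+\be)>0$ (using $\al>0$), and together with $\rho\le\ga$ this places $\tau^\ast$ strictly to the right of the whole admissible interval $[0,\rho]$. Hence $f'<0$ on $[0,\rho]$, so $f$ is strictly decreasing there and attains its maximum at $\tau=0$.

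The step I expect to be the main obstacle is this monotonicity analysis, since it is precisely what forces the answer in \eqref{eq:asy8} to be free of $\tau$: a careless estimate could introduce a spurious interior critical point. (The sign $f'(0)=\log\bigl((1-\be)\ga/(\al+\ga)\bigr)<0$ alone is suggestive but does not rule out an interior maximum; one really needs the location of $\tau^\ast$.) I would also need to dispose of the degenerate configurations --- most notably $\be+\ga\ge 1$, where a genuine portion of the nominal summation range is killed by the vanishing of $\binom{n-t}{k-1}$, and boundary values of $\al,\be,\ga$ --- but in each of these the same formulas for $f$ and $f'$ apply verbatim on the truncated interval $[0,\rho]$ and the conclusion is unchanged. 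Finally, the phrase ``asymptotically smaller terms'' in \eqref{eq:asy8} should be read as absorbing both the $O(n)$ factor from the number of summands and the polynomial corrections coming from Stirling's formula.
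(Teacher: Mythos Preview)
Your argument is correct, and at heart it is the same idea as the paper's: the $t=1$ term dominates, Stirling gives its exponential order, and the factor $O(n)$ from the number of summands is absorbed into the ``asymptotically smaller terms.'' The difference is in how you establish the dominance of $t=1$. You pass through Stirling first, set up the Laplace exponent $f(\tau)$, compute $f'(\tau)$, locate the critical point $\tau^\ast=(\al+\be\ga)/(\al+\be)$, and check that $\tau^\ast>\ga\ge\rho$ so that $f$ is strictly decreasing on $[0,\rho]$. The paper bypasses all of this by observing that the \emph{exact} summands are monotone decreasing in $t$: writing $\binom{s+j-t}{s-t}=\binom{s+j-t}{j}$, both binomial factors $\binom{n-t}{k-1}$ and $\binom{s+j-t}{j}$ visibly decrease as $t$ grows, so the maximum is at $t=1$ without any asymptotics or calculus. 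Your route is a perfectly good Laplace-method template and would be the right thing to do if the maximizing $t$ were in the interior; here, though, the elementary monotonicity of the summands makes the machinery (and the attendant worries about $o(n)$ errors and boundary degeneracies that you flag) unnecessary.
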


\begin{proof}
The summands of the sum on the left-hand side of \eqref{eq:asy8} are
monotone decreasing in $t$. In particular, they are bounded above by the
summand with $t=1$. Stirling's formula \eqref{eq:Stirling} applied to
this summand yields the approximation given on the right-hand side of
\eqref{eq:asy8}, and since the number of summands is $s\sim  \ga n$,
the approximation given there is also valid for the whole sum.
\end{proof}

\begin{theorem} \label{thm:regimeB}
Let $\be$ be a positive real number with $\be\le1/2$.
For $k= \be n+o(n)$, we have
\begin{equation} \label{eq:regimeB}
\Hdepth _1 M(n,k)=\(1-\ga\) n+o(n),\quad \quad \text {as $n\to\infty$},
\end{equation}
where $\ga$ is the smallest nonnegative solution of the equation
\begin{equation} \label{eq:Gl}
\frac {(\al+ \ga )^{\al+ \ga }
(\al+ \be )^{\al+ \be }
(1-\al- \be - \ga )^{1-\al- \be - \ga }}
{\al^{\al} \be ^{ \be } \ga ^{ \ga }(1- \be )^{1- \be }
(1- \ga )^{1- \ga }}=1,
\end{equation}
with
$$\al=\frac {1}
{4}\big(1-2\be-2\ga+\sqrt{(1-2\be-2\ga)^2-8\be\ga}\big).$$
\end{theorem}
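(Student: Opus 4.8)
The plan is to combine the criterion~\eqref{HD3} with the closed form~\eqref{eq:sum1} of Proposition~\ref{summation}, and then to analyse the exponential growth rates appearing in the resulting positivity condition by means of Lemmas~\ref{lem:3} and~\ref{lem:4}. By~\eqref{HD3}, $\Hdepth_1 M(n,k)=n-s^*$, where $s^*=s^*(n,k)$ is the least $s$ for which every coefficient of $Q_{n,k}/(1-T)^s$ is nonnegative. Reading the coefficients off from~\eqref{eq:sum1}, those with $j$ even are automatically nonnegative, so $s^*$ is the least $s$ such that
\[
\sum_{t=1}^s\binom{n-t}{k-1}\binom{s-t+j}{s-t}\geq\binom{n-s}{k+j}
\]
for every odd $j$ with $0\le j\le n-s-k$ (for larger $j$ the right-hand side vanishes). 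Thus everything reduces to locating, to within $o(n)$, the threshold value of $s$ at which this family of inequalities first becomes valid: writing $s=\ga n+o(n)$, we must find the least admissible $\ga$, and then $\Hdepth_1 M(n,k)=n-s^*=(1-\ga)n+o(n)$, which is~\eqref{eq:regimeB}.

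The next step is to substitute $k=\be n+o(n)$, $j=\al n+o(n)$, $s=\ga n+o(n)$ and to invoke Lemmas~\ref{lem:3} and~\ref{lem:4}. The inner sum on the left grows like the $n$-th power of $(\al+\ga)^{\al+\ga}\big/\bigl(\al^{\al}\be^{\be}\ga^{\ga}(1-\be)^{1-\be}\bigr)$, while the binomial coefficient on the right grows like the $n$-th power of $(1-\ga)^{1-\ga}\big/\bigl((\al+\be)^{\al+\be}(1-\al-\be-\ga)^{1-\al-\be-\ga}\bigr)$. Hence, up to subexponential factors, the quotient of the left-hand side by the right-hand side grows like $F(\al,\ga)^n$, where $F(\al,\ga)$ is exactly the left-hand side of~\eqref{eq:Gl}. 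Consequently the required inequalities hold, for all large $n$, essentially when $F(\al,\ga)\ge1$ for every relevant $\al$; that is, the admissible $\ga$ are (asymptotically) those with $\min_{0\le\al\le1-\be-\ga}F(\al,\ga)\ge1$. Some care is needed at the two ends of the $\al$-range --- for $j=o(n)$ and for $j$ close to $n-s-k$ --- but there the right-hand binomial coefficient has strictly smaller exponential order than the left-hand sum (which is at least its $t=1$ summand $\binom{n-1}{k-1}\binom{s-1+j}{s-1}$), so these boundary regimes impose no additional constraint.

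It remains to identify the least $\ga$ with $\min_\al F(\al,\ga)\ge1$ and to match it with the quantity in the theorem. One computes $\frac{\partial}{\partial\al}\log F(\al,\ga)=\log\frac{(\al+\ga)(\al+\be)}{\al(1-\al-\be-\ga)}$, which vanishes precisely when $2\al^2+(2\be+2\ga-1)\al+\be\ga=0$. As this upward parabola is negative between its roots and positive outside, $F(\cdot,\ga)$ increases, then decreases, then increases on $[0,1-\be-\ga]$, so $\min_\al F(\al,\ga)=\min\bigl(F(0,\ga),F(\al_+,\ga)\bigr)$, where $\al_+=\frac14\bigl(1-2\be-2\ga+\sqrt{(1-2\be-2\ga)^2-8\be\ga}\bigr)$ is the larger root. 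On the other hand, $\frac{\partial}{\partial\ga}\log F(\al,\ga)=\log\frac{(\al+\ga)(1-\ga)}{\ga(1-\al-\be-\ga)}>0$ for every $\al$, since the numerator exceeds the denominator by $\al+\be\ga$; specialising to $\al=0$ and using $F(0,0)=1$ shows $F(0,\ga)\ge1$ for all $\ga\ge0$, so the binding constraint is $F(\al_+,\ga)\ge1$, and $\ga\mapsto\min_\al F(\al,\ga)$ is nondecreasing and continuous. Hence there is a well-defined smallest $\ga$ at which it reaches the value $1$; at that $\ga$ one has $F(\al_+(\ga),\ga)=1$, which is exactly equation~\eqref{eq:Gl} with $\al=\al_+$. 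One also records that for $\ga$ at or below this threshold the discriminant $(1-2\be-2\ga)^2-8\be\ga$ is positive --- it equals $(1-2\be)^2$ at $\ga=0$ and decreases in $\ga$ --- and that $0<\al_+<1-\be-\ga$; this is where the hypothesis $\be\le1/2$ enters. With $\ga=\ga(\be)$ so determined, the reduction of the first paragraph yields $s^*=\ga n+o(n)$, i.e.~\eqref{eq:regimeB}.

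The step I expect to be the main obstacle is the one carried out in the second paragraph: converting the pointwise statement about the exponential growth rate $F(\al,\ga)$ into the nonnegativity of \emph{all} coefficients of $Q_{n,k}/(1-T)^s$ for large $n$. This requires the estimates of Lemmas~\ref{lem:3} and~\ref{lem:4} to be uniform in $j$ over the whole admissible range $0\le j\le n-s-k$ (in particular, uniform control of the subexponential factors), together with a separate treatment of the two $j$-boundary regimes in which the binomial coefficient ceases to be of exponential size. The shape analysis of $F(\cdot,\ga)$ in the third paragraph is conceptually the heart of the argument, but once the relevant quantities are in place it is an elementary, if somewhat delicate, calculus computation.
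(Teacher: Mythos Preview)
Your proposal is correct and follows essentially the same route as the paper: form the quotient $F(\al,\ga)$ of the exponential bases from Lemmas~\ref{lem:3} and~\ref{lem:4}, minimise over $\al$ via the logarithmic derivative $\log\frac{(\al+\ga)(\al+\be)}{\al(1-\al-\be-\ga)}$ to obtain the quadratic and its larger root $\al_+$, check the boundary values $F(0,\ga),F(1-\be-\ga,\ga)\ge1$, and then locate the threshold $\ga$ at which $F(\al_+,\ga)=1$. Your monotonicity argument $\partial_\ga\log F=\log\frac{(\al+\ga)(1-\ga)}{\ga(1-\al-\be-\ga)}>0$, yielding that $\ga\mapsto\min_\al F(\al,\ga)$ is nondecreasing, is a genuine addition --- the paper only verifies $f(\al_+)\big\vert_{\ga=0}<1$ and relegates the monotonicity/uniqueness question to Remarks~\ref{rem:regimeB}(1) on the basis of numerical evidence --- while both your proposal and the paper are equally informal about the uniformity in $j$ of the exponential estimates.
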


\begin{remarks} \label{rem:regimeB}
(1) Our computer calculations show that there is always {\it exactly
one} solution to \eqref{eq:Gl}. More precisely, as a function in
$\ga$, the left-hand side of \eqref{eq:Gl} seems always to be a
monotone increasing function. In view of the daunting expression that
one obtains by substituting the indicated value of $\al$ in
\eqref{eq:Gl}, we did not try to prove this observation since this is
also not essential for the assertion of Theorem~\ref{thm:regimeB}.

(2) It is not difficult to see that the value $\ga$ in
Theorem~\ref{thm:regimeB} satisfies $\ga\le 1/2-\be$. Indeed,
except for $\be$ close to $0$ or close to $1/2$, this is
so by a large margin, as the graph in Figure~\ref{fig1} shows.
As we already remarked at the beginning of this part, this yields a
considerable improvement over the bound implied by
Corollary~\ref{StdepthMk}.

\begin{figure}[htb]

\vbox{
\leavevmode
\hbox{\hskip5cm}

\vskip4.5cm
\leavevmode

\centerline{\hskip-4.8cm
\includegraphics[width=0.25\linewidth,height=0.25\linewidth]{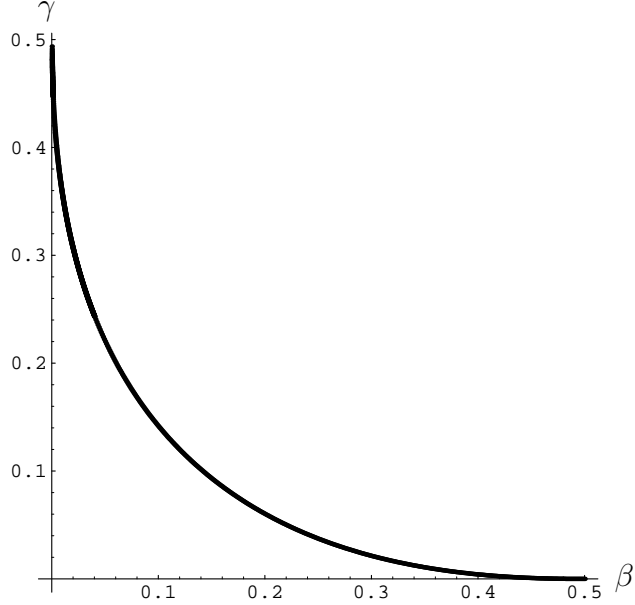}
}
\vskip-8.4cm
\centerline{\hskip3.5cm$\ga$\hfill}
\vskip7.1cm
\centerline{\hskip7.6cm$\be$}
\vskip10pt
}

\caption{The value $\ga$ as a function in $\be$}
\label{fig1}
\end{figure}

\end{remarks}

\begin{proof}[Proof of Theorem~\ref{thm:regimeB}]
We proceed in a manner similar to the proof of
Theorem~\ref{thm:regimeA}. First, we form the quotient of
\eqref{eq:asy8} and \eqref{eq:asy7}, which is asymptotically
\begin{multline} \label{eq:quotB}
\(\frac {(\al+ \ga )^{\al+ \ga }
(\al+ \be )^{\al+ \be }
(1-\al- \be - \ga )^{1-\al- \be - \ga }}
{\al^{\al} \be ^{ \be } \ga ^{ \ga }(1- \be )^{1- \be }
(1- \ga )^{1- \ga }}\)^n\\
\times\text {asymptotically smaller terms},
\end{multline}
as $n\to\infty$. In view of \eqref{HD3}, we need to find the smallest
$\ga$ such that the base of the exponential in \eqref{eq:quotB} is
larger than $1$ for all $\al$ with $0\le \al\le 1-\be-\ga$.
Hence, we should next consider this base,
$$
f(\al):=\frac {(\al+ \ga )^{\al+ \ga }
(\al+ \be )^{\al+ \be }
(1-\al- \be - \ga )^{1-\al- \be - \ga }}
{\al^{\al} \be ^{ \be } \ga ^{ \ga }(1- \be )^{1- \be }
(1- \ga )^{1- \ga }},
$$
and, given $\be$ and $\ga$, discuss it as a function in $\al$.
More precisely, our goal is to determine the value(s) of $\al$ for
which $f(\al)$ attains its minimum. In a subsequent step, we shall
have to find the smallest possible $\ga$ such that this minimum is
at least $1$.

Our first observation is that both
$$f(0)=\frac {(1- \be - \ga )^{1- \be - \ga }}
{ (1- \be )^{1- \be }
(1- \ga )^{1- \ga }}$$
and
$$
f(1-\be-\ga)=\frac 1
{(1-\be-\ga)^{1-\be-\ga} \be ^{ \be } \ga ^{ \ga }}
$$
are at least $1$. For $f(1-\be-\ga)$ this is totally obvious since
$\be$, $\ga$, and $1-\be-\ga$ are numbers between $0$ and $1$, while
for $f(0)$ this follows from the facts that $f(0)\big\vert_{\ga=0}=1$
and that $f(0)$ is monotone increasing in $\ga$. Consequently, for
given $\be$ and $\ga$, the minimum of $f(\al)$ is either at least
$1$, or it is attained in the interior of the interval
$[0,1-\be-\ga]$. In order to find the places of minima in the
interior of this interval, we compute the logarithmic derivative
of $f(\al)$,
\begin{equation} \label{eq:fder}
\frac {d} {d\al}\log f(\al)=
\log\frac {(\al+ \ga )(\al+ \be )}
{\al(1-\al- \be - \ga )},
\end{equation}
and equate it to $0$. This equation leads to a quadratic equation in
$\al$ with solutions
\begin{equation} \label{eq:alpha}
\al=\frac {1}
{4}\big(1-2\be-2\ga\pm\sqrt{(1-2\be-2\ga)^2-8\be\ga}\big).
\end{equation}
Since, from \eqref{eq:fder}, we see that the derivative of $f(\al)$
is $+\infty$ at $\al=0$ and at $\al=1-\be-\ga$, the smaller of the
two solutions in \eqref{eq:alpha} must be the place of a local maximum of
$f(\al)$, while the larger solution must be the place of a local minimum
(if they are at all real numbers).

Finally, we must find the smallest $\ga$ such that the minimum of
$f(\al)$, for $\al$ ranging in the interval $[0,1-\be-\ga]$, is at
least $1$. In particular, the above described local minimum must be
at least $1$. Hence, we must substitute the larger value given by
\eqref{eq:alpha}, $\al_0$ say,
in \eqref{eq:quotB}, and restrict our search to
those values of $\ga$, where the result is at least $1$.

Now, if we do this substitution in \eqref{eq:quotB}
(the reader should observe that the result is exactly the left-hand
side of \eqref{eq:Gl}) and set $\ga=0$,
then we obtain
$$
f(\al_0)\big\vert_{\ga=0}=\frac {1} {2\,(1-\be)^{1-\be}}\le
\frac {1} {2\,(1/e)^{1/e}}=0.72\ldots<1.
$$
Hence, the smallest $\ga$ such that the minimum of
$f(\al)$, for $\al$ ranging in the interval $[0,1-\be-\ga]$, is at
least $1$ is indeed the solution to the equation \eqref{eq:Gl}.
\end{proof}

\end{document}